\DeclareMathOperator*{\esssup}{ess\,sup}
\newcommand\norm[1]{\left\lVert#1\right\rVert}	
\newcommand\japbrack[1]{\left\langle#1\right\rangle}
\newcommand\absval[1]{\left|#1\right|}
\theoremstyle{definition}
\newtheorem{definition}{Definition}[section]
\newtheorem{remark}[definition]{Remark}
\theoremstyle{plain}
\newtheorem{theorem}[definition]{Theorem}
\newtheorem{lemma}[definition]{Lemma}
\begin{document}
\title{On a coupled system of KP-type} %%%%%%%%%%%%
\author{Jacob B. Aguilar, Ph.D.}
%\date{\today}
%\address{ABC}
%\email{jacob.aguilar@saintleo.edu}

\begin{abstract}
A defining characteristic of the Kadomstev-Petviashvili (KP) model equation is that the well-posedness results are subject to the restriction that at all transverse positions, the mass
$\int u \,dx = \text{constant independent of $y$}.$ In 2007, for a rather general class of equations of KP type, it was shown that the zero-mass (in $x$) constraint is satisfied at any non-zero time even if it is not satisfied at initial time zero. To remedy this ``odd'' behavior, a model modification is introduced which does not impose non-physical restrictions upon the initial data. In this article, we introduce a new modified KP system, named the Non-KP model equation. After providing a variational derivation of the Non-KP model, we analyze its Hamiltonian evolutionary structure. Furthermore, we prove linear estimates in the Bourgain spaces $X^{s,b}$ corresponding to the integral equation arising from the Duhamel formulation of system.
\end{abstract} %%%%%%%%%

\maketitle

\let\thefootnote\relax\footnotetext{\textit{Dedicated to my wife, Miok Ma Aguilar.}}
\let\thefootnote\relax\footnotetext{\textit{E-mail address}: \textbf{jacob.aguilar@saintleo.edu}}
\let\thefootnote\relax\footnotetext{\textit{2020 Mathematics Subject Classification}. 35Q35, 35Q53, 35A23, 30L15, 35B45, 35G20, 32W25, 35S30. } %%%%%%%%%%

\bigskip

\section{Introduction and Background}\label{sec:introduction}
In this work we introduce a new coupled KP model. Due to the ``odd'' behavior of the mass of a solution to the KP Cauchy problem, a model modification is introduced which does not impose non-physical restrictions upon the initial data. Listed below is the pure initial value problem for the Non-KP model equation. 
\begin{equation}
\begin{cases}
u_{t} + u_{x} + uu_{x} - u_{xxt} + v_y - v_{xxy} = 0 \quad \text{for} \quad  (x,y,t) \in \mathbb{R}^2 \times \mathbb{R}_+,&\\
v_t - v_{xxt} + u_y +uu_y = 0, &\\
u(x,y,0) = u_0, &\\
v(x,y,0) = v_0. &\\
\end{cases}
\label{NON-KP}
\end{equation}
Nonlinear dispersive equations naturally arise in physical settings as models for the unidirectional propagation of weakly dispersive long waves with weak transverse effects. Due to the increasing demand of sea transport and off shore oil exploration, many scientist and engineers are concerned with evaluating forces applied by traveling waves on various off-shore structures \cite{MA}. These applications usually reduce to solving a partial differential equation and utilizing the obtained results to estimate the pressure field.

In the past few decades, research in nonlinear dispersive waves has been very active. In many applications of nonlinear dispersive evolution equations, it is useful to have estimates for the solution or forcing term in both the spatial and temporal variables. The dispersive smoothing effect enjoyed by these equations motivated the invention of the Bourgain spaces $X^{s,b}$, which provide a convenient way to control the size of the solution to the linear problem in terms of the initial data, or of the forcing term \cite{B2,B3,BT,BLT,ST,ACD,TBB,MST1,MST2,MING}. Additionally, classes of inequalities have emerged from connections to the Fourier restriction problem. A well-known example are the Strichartz estimates, which are inequalities designed to determine the size and decay of solutions to dispersive partial differential equations in mixed norm Lebesgue spaces \cite{strichartz1977restrictions}.

In the context of fluid dynamics, dispersion of water waves refers to the phenomenon that waves of different wavelengths travel at different phase speeds. Unlike waves traveling through a nondispersive medium, dispersive waves get deformed as they evolve. Water waves also exhibit a nonlinear effect in which the phase speed is dependent on the size of the amplitude. In many modeling scenarios there is a delicate interplay between the frequency dispersion and nonlinear effects. This balance between linear dispersive and nonlinear effects is represented through the foundational long wave scaling regime of surface waves. In accordance with the transformation theory presented in \cite{CG}, the KdV equation and three-dimensional approximations, such as the KP equation can be recovered from the Boussinesq scaling regime by choosing a suitable reference frame.

The KP model and its regularized version, the Benjamin-Bona-Mahony-Kadomtsev-Petviashvili (BBM-KP) model, arise in modeling scenarios corresponding to nonlinear dispersive waves which propagate principally along the $x$-axis with weak dispersive effects undergone in the direction parallel to the $y$-axis and normal to the main direction of propagation \cite{bona2002cauchy,kadomtsev1970stability}.

A defining characteristic of this class of models is that the well-posedness results are subject to the restriction that at all transverse positions, the mass
$$\int_{\mathbb{R}} u \,dx = \text{constant independent of $y$}.$$
In 2007, for a rather general class of equations of KP type, Molinet et al \cite{MST3} , showed that the zero-mass (in $x$) constraint is satisfied at any non zero time even if it is not satisfied at initial time zero. To remedy this odd behavior, we suggest a modification to the traditional KP model equation which does not impose non-physical restrictions upon the initial data. The formal derivation of the Non-KP model equation \ref{NON-KP} is covered in Section \ref{sec:variational}.  A point of major departure from the traditional KP model is that if $(u,v)$ is a solution to the linearized version of the Non-KP model, then the mass  $m(y,t) = \int_{\mathbb{R}} u dx$ is no longer independent of the transverse variable $y$. In fact, $m$ satisfies the wave equation. To find the dispersion relation of system \ref{NON-KP}, we insert the ansatz 
\begin{equation*}
\left(u,v\right)=\left(Ae^{i(\xi x+\mu y-\omega t)},Be^{i(\xi x+\mu y-\omega t)}\right)
\end{equation*}  
into the linearized version of the system to obtain
\begin{equation*}
\begin{cases}
-\omega A+\xi A -\xi^2 \omega A +B \mu +B \xi^2 \mu = 0,&\\
A \mu -\omega B -B \xi^2 \omega = 0. &\\
\end{cases}
\end{equation*}
This results in the following matrix equation
\begin{displaymath}
 \begin{pmatrix}
   -\omega+\xi-\xi^2\omega & \mu+\xi^2\mu \\[5pt]
   \mu & -\omega-\xi^2\omega
  \end{pmatrix} \begin{pmatrix}
  A \\[5pt]
  B 
 \end{pmatrix} =\begin{pmatrix}
  0 \\[5pt]
  0 
 \end{pmatrix}.
\end{displaymath}
If the existence of a nontrivial solution is to be guaranteed, we must have that 
$$\left(-\omega-\xi^2\omega \right)^2 - \left(\xi+\xi^3 \right) \omega - \mu^2 \left(1+\xi^2 \right)=0.$$
It follows that the dispersion relation for system \ref{NON-KP} is
\begin{equation}
\omega(\xi,\mu)=\frac{\xi \pm \sqrt{\xi^2+4\mu^2\left(1+\xi^2\right)}}{2\left(1+\xi^2\right)} \qquad \hfill \text{(Dispersion relation of Non-KP model)}
\label{dispersionrelationNONKP}
\end{equation}
In what follows we define the dependent variables
\begin{equation*}
\omega_1(\xi,\mu):=\frac{\xi + \sqrt{\xi^2+4\mu^2\left(1+\xi^2\right)}}{2\left(1+\xi^2\right)}\quad \text{and} \quad \omega_2(\xi,\mu):=\frac{\xi - \sqrt{\xi^2+4\mu^2\left(1+\xi^2\right)}}{2\left(1+\xi^2\right)}.
\end{equation*}
The dispersion relation \ref{dispersionrelationNONKP} for the Non-KP model contains a term which depends on both spatial Fourier modes and is raised to the power $\frac{1}{2}$, unlike the linearized dispersion relation for the KP equation and its regularized variants \cite{kadomtsev1970stability,daspan2007comparison,aguilar2024convergence}. 

This manuscript introduces a new system of KP-type and is organized as follows: Section \ref{sec:notate} features the mathematical framework and introduces the function spaces which are adapted to the system of interest,  Section \ref{sec:variational} provides a variational derivation of the Non-KP model and motivates the model modification which does not impose non-physical restrictions upon the initial data,
Section \ref{sec:hamiltonian} is dedicated to the Hamiltonian evolutionary structure and diagonalization of the model, and Section \ref{sec:linear} contains proofs of linear estimates in the Bourgain spaces $X^{s,b}$ corresponding to the integral equation arising from the Duhamel formulation of system. ``Appendix'' \ref{sec:appendix} consists of supplementary material covering the derivation of a recursive formula for the terms in the Taylor expansion of the Dirichlet-Neumann operator.
\section{Notation and Framework}\label{sec:notate}
\subsection{Preliminaries}
In this section, an overview of the notation and mathematical framework required for obtaining our results is introduced. For an in-depth look into the function classes utilized in this work, and many other closely related topics in the fields of functional analysis and nonlinear partial differential equations, the reader is referred to \cite{tao2006nonlinear,grafakos2008classical,evans2010partial,goldstein2017semigroups}. 

Firstly, let $\mathbb{R}_+:=\{x \in \mathbb{R}:x > 0\}$ and $\mathbb{R}_0:=\mathbb{R}_+ \cup \{0\}$ denote the sets of strictly positive and non-negative real numbers, respectively. Throughout this article, $n \in \mathbb{N}$ is the dimension of the Euclidean space $\mathbb{R}^n$. For $x \in \mathbb{R}^n$, we denote $\absval{x} := \left(x_1^2+ \cdots + x_n^2 \right)^\frac{1}{2}$ to be the magnitude of the vector $x$ and $(\cdot,\cdot):= \absval{x}^2$ be the canonical inner product on $\mathbb{R}^n\times\mathbb{R}^n$. The ordered pairs $(\xi, \mu)$ and $(\xi, \mu, \tau)$ correspond to the Fourier variables dual to $(x, y)$ and $(x, y, t)$, respectively. We use $\japbrack{x}$ to denote the Japanese bracket $\japbrack{x} := \left( 1+x^2 \right)^\frac{1}{2}$ of $x$. It is an immediate consequence that $\japbrack{x}$ is comparable to $\left|x\right|$ for sufficiently large $x$ and $1$ for small $x$. Furthermore, we adopt the following convention. 
\begin{definition}\label{def:bound}
Let $A,B \in \mathbb{R}_0$, we denote $A \vee B := \max\{A,B\}$ and $A \wedge B := \min\{A,B\}$. Furthermore, the notation $A \lesssim B$ (respectively $A \gtrsim B$) means that there exists an absolute positive constant $C$ such that $A \leq C B$  (respectively $A \geq C B$).
\end{definition}
If $X$ and $Y$ are both Banach spaces, we denote a continuous embedding between them by $\hookrightarrow$, i.e. $X \hookrightarrow Y$ means that $\norm{u}_Y \lesssim \norm{u}_X$.
\subsection{Function Spaces}
All integrals will be with respect to Lebesgue measure $\lambda$ for any complex-valued measurable function $u$. The Banach space $L^\infty(\mathbb{R})$ of all real valued measurable functions  which are essentially bounded is characterized by the following norm
%----------------------------
\begin{equation*}
	|u|_\infty := \inf \left\lbrace C \geq 0 : \lambda(\{ x: |u|> C \})=0 \right\rbrace. 
\end{equation*}
   Extensive use will be made of the Lebesgue norms
$$\norm{u}_{L^p_x(\mathbb{R}^n \rightarrow C)} := \Big(\int_{\mathbb{R}^n}|u|^p dx \Big)^\frac{1}{p}$$
\noindent where $1\leq p \leq \infty$ for complex-valued measurable functions $u:\mathbb{R}^n \rightarrow \mathbb{C}$, with the convention that $\norm{u}_{L^\infty_x(\mathbb{R}^n \rightarrow \mathbb{C})} := \esssup_{x \in \mathbb{R}^n} \left|u \right |$. In this setting, $\mathbb{C}$ can be replaced with any Banach space $X$. For example, $L^p_x(\mathbb{R}^n \rightarrow X)$ denotes the space of all measurable functions $f:\mathbb{R}^n \rightarrow X$ endowed with the following norm.
$$\norm{u}_{L^p_x(\mathbb{R}^n \rightarrow X)} := \Big(\int_{\mathbb{R}^n}\norm{u}_X^p dx \Big)^\frac{1}{p}$$
In many cases we will abbreviate $L^p_x(\mathbb{R}^n \rightarrow X)$ as $L^p_x(\mathbb{R}^n)$, or even $L^p$ provided that the context is clear. This abuse of notation will also be adopted for other function classes utilized throughout this work. Define the time localized function space $C^{t_0}_{t_1}X := C\left([t_0,t_1];X\right)$ equipped with the norm
$$\norm{u}_{C^{a}_{b}X} := \sup_{t \in [a,b]} \norm{u}_X.$$
It is well known that if $X$ is a Banach space, then so is  $C^{a}_{b}X$ \cite{OO}. A function $u:\mathbb{R}^n \rightarrow C$  is called rapidly decreasing if
$$\norm{\japbrack{x}^N u}_{L^\infty_x(\mathbb{R}^n)} < \infty$$
for all non-negative $N$. A smooth function belongs to the Schwartz class $S(\mathbb{R}^n)$ provided that all of its derivatives are rapidly decreasing. It is well know that $S(\mathbb{R}^n)$ is a Fr\'echet Space and, as a result, has a dual denoted $S'(\mathbb{R}^n)$, i.e. the space of tempered distributions \cite{OO}. For $u \in S'(\mathbb{R}^n)$ we denote by $\hat{u}$ or $\mathcal{F}(u)$ to be the space-time Fourier transform 
$$\hat{u}(\xi, \mu, \tau) := \int_{\mathbb{R}^3} e^{-i(x\xi + y\mu + t\tau)} u(x,y,t)dx dy dt.$$
\noindent Moreover, we will use $\mathcal{F}_{x,y}$ and $\mathcal{F}_t$ to denote the Fourier transform in the spatial and temporal variables, respectively. 
\begin{remark}
The pseudo-differential operator $\partial^{-1}_x$ is defined via the Fourier transform as 
$$\widehat{\partial^{-1}_x u} = \frac{1}{i\xi} \hat{u}(\xi,y).$$
Due to the singularity of the symbol $\frac{1}{\xi}$ at $\xi =0$, one requires that $\hat{u}(0,y)=0$
(the Fourier transform in the variable $x$), which is clearly equivalent to
$$\int_{\mathbb{R}}u(x,y) dx=0.$$
In what follows, $\partial^{-1}_x u\in L^2(\mathbb{R}^2)$ means there exists an $L^2(\mathbb{R}^2)$ function $g$ such 
that $u=v_x$, at least in the distributional sense. When we write $\partial^{-k}_x \partial^{m}_y$ for $\left( k,m\right) \in \mathbb{Z}^+ \times \mathbb{Z}^+$, it is implicitly assumed that the operator is well-defined. As pointed out in \cite{MST3}, this imposes a constraint on the solution $u$. This condition implies that $u$ is an $x$ derivative of a suitable parent function. This can be achieved in the following two ways:
\begin{itemize}
\item If $u\in S'(\mathbb{R}^2)$ and $\xi^{-k} \mu^{m} \hat{u}(\xi,\mu, t)\in S'(\mathbb{R}^2)$.
\item If $u(x,y,t)= \frac{\partial}{\partial_x}v(x,y,t)$ for some $v \in C^1_x(\mathbb{R})$, the space of continuous functions possessing a continuous derivative with respect to $x$. This possibility imposes a decay condition on $u$. For any fixed $y \in \mathbb{R}$ and $t \in \mathbb{R}_+$ we must have $u \rightarrow 0$ as $x \rightarrow \pm \infty$, as a result $\int_{\mathbb{R}}u(x,y,t) dx=0 \quad \text{for} \quad (y,t) \in \mathbb{R} \times \mathbb{R}_+$.
\end{itemize}
\end{remark}
%%%
Let $H^s(\mathbb{R}^2)$ denote the classical Sobolev space $H^s(\mathbb{R}^2)$ equipped with the norm
$$||u||_{s} = \Big(\int_{\mathbb{R}^2}(1 + \mu^2 + \xi^2)^s|\hat{u}(\xi,\mu)|^2 d\xi d \mu \Big)^\frac{1}{2}.$$

Define the follow class of functions
$$\mathcal{X}^s(\mathbb{R}^2):=\Big\{u : u \in H^s(\mathbb{R}^2) \cap H^{s-1}(\mathbb{R}^2)\Big\}.$$
 If $v(x,y,t)= {\partial^{-1}_x}u(x,y,t)$ and $w(x,y,t) = \int^{x'}_{-\infty}u(x',y,t) dx'$, then
$\hat{u}(t)= \hat{w}(t)$ in  $S'(\mathbb{R}^2)$ for all $t \in \left[-T,T\right]$,
thus $v=w$ due to that fact that the Fourier transform is an isomorphism on $S'(\mathbb{R}^2)$. Since $u \in C\left([-T,T]; \mathcal{X}^s(\mathbb{R}^2)\right)$ and $s>2$, it is a direct consequence that $u \in L^1_{x,y}$ \cite{M2}. Moreover, $\partial^{-1}_x u \in C\left([-T,T]; H^s(\mathbb{R}^2)\right)$ and since $S(\mathbb{R}^n)$ is dense in $H^s(\mathbb{R}^2)$, we must have that $v \rightarrow 0$ as $x \rightarrow \pm \infty$. Thus,
\begin{equation*}
\int_{\mathbb{R}}u(x',y,t) dx' = \lim_{x \rightarrow \infty}\int^{x'}_{-\infty}u(x',y,t) dx'
 :=\lim_{x \rightarrow \infty} w(x,y,t)\\
 = \lim_{x \rightarrow \infty} v(x,y,t)
 = \lim_{x \rightarrow \infty} {\partial^{-1}_x}u(x,y,t)
 = 0.
\end{equation*}
Extensive use will be made of the Bourgain spaces $X^{s,b}$, also referred to as the Dispersive Sobolev Spaces. 
\begin{definition}[Bourgain spaces]
 Let $s,b$ be in $\mathbb{R}$, $(\xi,\tau)\in \mathbb{R}^d\times\mathbb{R}$ and $h\in C(\mathbb{R}^d)$.  The space $X_{\tau=h(\xi)}^{s,b}$ is defined to be the closure of the Schwartz functions $S_{x,t}(\mathbb{R}^d\times\mathbb{R})$ under the norm
\begin{displaymath}
 \| u\|_{X_{\tau=h(\xi)}^{s,b}}:= \|\langle \xi\rangle^s\langle\tau-h(\xi)\rangle^b\widehat u(\xi,\tau)\|_{L^2_{\xi,\tau}}.
\end{displaymath}
\end{definition}
%%%
These spaces are well suited for analyzing nonlinear dispersive evolution equations only after localizing in time. Fortunately, the space $X_{\tau=h(\xi)}^{s,b}$ is well-behaved with respect to time localization and enjoys the Sobolev embedding $X^{s,b}_{\tau=h(\xi)} \hookrightarrow  C^{0}_{t}{H}^s$. If the parameter $b=0$, then the dispersion does not matter and the space becomes the larger $L^2 H^s_x$ and if $h=0$, then there is no dispersion and the space is the smaller $H^b_t H^s_x$. For a detailed study of these spaces, we refer the reader to \cite{tao2006nonlinear}. For our purposes, we will need to adapt these spaces to the system of interest. As a result, we have the following definition.

\begin{definition}[Function spaces adapted to the Non-KP model \ref{NON-KP}]
\label{SobolevSpacesNONKP}
Let $(b,s) \in \mathbb{R}\times\mathbb{R}_0$ and $i=1,2$ and define the following function spaces.
\begin{align*}
\norm{u}_{H^{b}}&=\norm{\japbrack{\tau}^b \hat{u}(\tau)}_{L_{\tau}^{2}}
\\
\norm{u}_{H^{b,s}}&=\norm{\japbrack{\xi}^2\japbrack{\tau}^b  \japbrack{\absval{\xi}+\absval{\mu}}^s \hat{u}(\xi,\mu,\tau)}_{L_{\xi,\mu,\tau}^{2}} 
\\
\norm{u}_{Z^{s}}&=\norm{\japbrack{\xi}^2 \japbrack{\absval{\xi}+\absval{\mu}}^s \hat{u}(\xi,\mu)}_{L_{\xi,\mu}^{2}} 
\\
\norm{u}_{X_i^{b,s}} &=\norm{\japbrack{\xi}^2\japbrack{\tau-\omega_i(\xi,\mu)}^b  \japbrack{\absval{\xi}+\absval{\mu}}^s \hat{u}(\xi,\mu,\tau)}_{L_{\xi,\mu,\tau}^{2}}  
\end{align*}
where 
\begin{equation*}
\omega_1(\xi,\mu)=\frac{\xi + \sqrt{\xi^2+4\mu^2\left(1+\xi^2\right)}}{2\left(1+\xi^2\right)} \quad \text{and} \quad \omega_2(\xi,\mu)=\frac{\xi - \sqrt{\xi^2+4\mu^2\left(1+\xi^2\right)}}{2\left(1+\xi^2\right)}
\end{equation*}
\end{definition}
The adapted function spaces listed above will be heavily utilized in our analysis of the Non-KP model \ref{NON-KP}. For each $i=1,2$, the spaces $X_i^{b,s} \subset S'(\mathbb{R}^3)$ are Bourgain spaces adapted to the Non-KP model. Furthermore, it is clear that $H^{b,s} \subset S'(\mathbb{R}^3)$ and $Z^s \subset H_x^2$.
\subsection{Hamiltonian Evolutionary Systems}
We provide a brief review of some basics of Hamiltonian partial differential equations. For a more comprehensive coverage, we refer the interested reader to \cite{CG}. Let $H: \mathcal{D}\subset P \rightarrow \mathbb{R}$ denote the Hamiltonian function defined on a dense subdomain $\mathcal{D}$ of $P$. The gradient is taken with respect to the inner product on $P$ and $J$ is a skew-adjoint operator called the structure map which defines the Poison bracket $\{ \cdot, \cdot \}=\left(\text{grad} (\cdot), J \text{grad} (\cdot)\right)$. It is well-known that this arrangement fixes a Poison structure on $P$ \cite{OO}. If the structure map $J$ is invertible, then the Poison structure is symplectic. The water-wave problem falls into a class of Hamiltonian evolutionary systems in which $\mathcal{D}\subset P =\left(L^2(X)\right)^n$ where $X \subset \mathbb{R}^n$. The structure map $J$ is independent of $u$, the $n$ components of the vector $u$ depend on a position variable $x \in X$ and the Hamiltonian has the form $\int_{X}H$ for a Hamiltonian density function $H$. In practice, $H$ depends on spatial derivatives of $u$ and $\mathcal{D}$ is chosen so that the integral is well-defined, e.g. $\mathcal{D}=\left(S(\mathbb{R}^n)\right)^n$.
\begin{definition}[Hamiltonian Evolutionary System]
A Hamiltonian Evolutionary System is a system of partial differential equations of the form 
\begin{displaymath}
u_t = J\text{grad} H(u)
\end{displaymath}
where $u(t)$ describes a path in a Hilbert space $P$ equipped with inner product $(\cdot, \cdot)$. 
\end{definition}
The method employed to derive the nonlinear dispersive evolution equation analyzed in this work was introduced by Walter Craig et al. \cite{CG}. This transformation theory provides a straightforward procedure to represent the relationship between the traditional symplectic structure present in the water-wave problem and various other symplectic structures emerging from long-wave approximations. Hamiltonian perturbation theory first appeared in the work of Benjamin \cite{TB}. The governing idea is to approximate a Hamiltonian evolutionary system by fixing its phase space and Poison structure and replacing the Hamiltonian density function by an approximation $H_A(u)$. This results in the following approximation
\begin{displaymath}
u_t = J\text{grad} H_A(u)
\end{displaymath}
of the original Hamiltonian system. Consider a Hamiltonian evolutionary system in which $H=H\left(u, \epsilon  \right)$ for some parameter $\epsilon$. If $H$ depends smoothly on $\epsilon$ one can expand it in the following power series
\begin{displaymath}
H\left(u, \epsilon  \right)= H_0\left(u \right) + \epsilon H_1\left(u \right) +  \epsilon ^2 H_2\left(u\right) + \cdots
\end{displaymath}
This expansion is then used to construct a sequence of Hamiltonians which approximate the original system, i.e.
\begin{displaymath}
u_t = J\text{grad} \left(\sum_{i=0}^N \epsilon^i H_i(u)\right) \quad \text{for} \quad N=1,2,\ldots
\end{displaymath}
\section{Variational derivation of the Non-KP Model}
\label{sec:variational}
The notion of modeling an ideal fluid with a free surface, being acted on by gravity is a classical problem in fluid mechanics. The surface water wave problem is described by the Euler equations coupled with appropriate boundary conditions on the bottom surface; in combination with kinematic and dynamic boundary conditions on the free surface. The nonlinearities and time dependency are both a direct result of the boundary conditions on the free surface. In the presence of weak transverse effects the unknowns are: the surface elevation, the horizontal and vertical fluid velocities and the pressure. The pressure is eliminated through means of the Bernoulli equation. 

The basic conservation laws for the water wave problem are mass, momentum and total energy (Hamiltonian) conservation. For mathematical purposes we will treat a control volume element as a subspace of $\mathbb{R}^3$, i.e. the continuum hypothesis. To this end we consider a body of water of finite depth being acted on by gravity and bounded below by an undisturbed solid impermeable surface. If we ignore the effects of viscosity and assume the flow is incompressible and irrotational, then the fluid motion is modeled by the Euler equations coupled with appropriate boundary conditions on the rigid bottom and water-air interface. Simplifying assumptions are invoked which lead to various regularized model equations valid for small-amplitude long wavelength motion.

The variational derivation of the Non-KP model
\ref{NON-KP} begins with the Hamiltonian formulation of the water wave problem as in found in \cite{CG}.
$$H(\eta, \Phi)= \frac{1}{2} \int_{\mathbb{R}} \Big[ \eta^2 + \Phi G(\eta) \Phi \Big]\ dx.\label{ham}$$
 The operator $G(\eta)$ appearing in the above Hamiltonian is the Dirichlet-Neumann operator for the fluid domain $S(\eta)=\{(x,y) :-h_0<y<\eta(x,t)\}$, where $-h_0$ is the bottom profile and $\eta$ is the free surface. This linear operator takes the initial data $\Phi$ and produces the normal derivative of the solution $\phi$ of the boundary value problem 
 \begin{equation}
\begin{cases}
\Delta \phi = 0 \quad \text{for} \quad (x,y) \in S(\eta) &\\
\nabla \phi \cdot \boldsymbol{n}=0   \quad \text{for} \quad \{(x,y): y=-h_0 \} &\\
\end{cases}
\label{1.10}
\end{equation}
where $\Phi(x,t)=\phi(x,\eta(x,t),t)$ is the trace of the potential at the free surface. More explicitly, if $\phi$ solves \ref{1.10} with Dirichlet data $\Phi(x,t)=\phi(x,\eta(x,t),t)$, then
\begin{equation*}
G(\eta) \Phi(x)\ dx = (\nabla \phi \cdot \boldsymbol{n})(x)\ dS(x). 
\label{1.11}
\end{equation*}
A substantial amount of variety in water wave models is a consequence of how this positive, symmetric, bounded, analytic operator is approximated. Due to the analyticity of the operator $G(\eta)$ it can be represented as a Taylor series expansion
\begin{equation}
G(\eta) = \sum_{j \geq 0} G_j(\eta),
\label{1.12}
\end{equation}
where each term $G_j(\eta)$ is homogeneous in $\eta$ of degree $j$. The scheme for deriving the water wave models is to substitute \ref{1.12} into the above Hamiltonian and truncate terms up to a specified order obtaining an approximate Hamiltonian. A derivation for a recursive formula of the operator $G(\eta)$ is included in the Appendix \ref{sec:appendix} for the sake of completeness.

For our purposes, it will be convenient to formulate the Hamiltonian in terms of the dependent variable $u=\Phi_x$. In this spirit we define the operator $\mathcal{K}(\eta)$ by
$$G(\eta) = D \mathcal{K}(\eta) D \quad \text{for} \quad D=-i\partial_x.$$
Since $\mathcal{K}$ inherits the analyticity of $G$, it has a Taylor expansion around zero, i.e.
\begin{equation*}
\mathcal{K}(\eta) \zeta  = \sum_{j \geq 0} \mathcal{K}_j(\eta)\zeta,\quad \text{for} \quad \mathcal{K}(\eta)=D^{-1}G_j(\eta)D^{-1}.
\label{1.13}
\end{equation*}
The operator $\mathcal{K}$ enables us to write the Hamiltonian in terms of $u$, viz.
$$H(\eta, \Phi)= \frac{1}{2} \int_{\mathbb{R}} \Big[ \eta^2 + u \mathcal{K}(\eta) u \Big]\ dx.$$
The water wave problem can then be written as a Hamiltonian system using variational derivatives of $H$ and posing the Hamiltonian equations
\begin{equation}
\begin{cases}
\eta_t = -\frac{\delta H}{\delta u} &\\
u_t = - \frac{\delta H}{\delta \eta} & \\
\end{cases}
\label{1.14}
\end{equation}
Observe that the structure map associated with \ref{1.14} is symmetric. Craig and Groves \cite{CG} explain how the Hamiltonian structure can be preserved under changes of dependent or independent variables. Following their method, we let $K=\sqrt{\mathcal{K}_0}=\sqrt{\frac{\tanh D}{D}}$ and separate out the right and left-going waves by defining
\begin{equation*}r = \frac{1}{2}(L^{-1}_1 \eta + u), \quad s = \frac{1}{2}(L^{-1}_1 \eta - u).
\end{equation*}
The right and left-moving interpretation of $r$ and $s$ can be extended away from the long wave limit by a better choice for the constant coefficient self-adjoint operator $L_1$ than its long wave asymptote of unity. The exact Hamiltonian equations for the evolution of $r,s$ and $w$ are
\begin{equation*}\left[ \begin{array}{c} \frac{\partial r}{\partial T} \\ \frac{\partial s}{\partial T} \\ \frac{\partial w}{\partial T} \end{array} \right] =\frac{1}{2}L^{-1}_1\begin{bmatrix} -\frac{\partial}{\partial X} & 0 & -\frac{\partial}{\partial z} \\ 0 & -\frac{\partial}{\partial X} &
-\frac{\partial}{\partial z}\\
-\frac{\partial}{\partial z} & -\frac{\partial}{\partial z} & 0 \end{bmatrix}  \left[ \begin{array}{c}  \frac{\delta H}{\delta r} \\ \frac{\delta H}{\delta s} \\ \frac{\delta H}{\delta w}\end{array} \right].
\end{equation*}
\begin{displaymath}
H = \frac{1}{2} \iint \Big[r(L^2_1 + K -2UL_1)r +wKw +2r(L^2_1 +K)s +s(L^2_1 + K -2UL_1)s \Big] \,dXdz.
\end{displaymath}
The KP equations are obtained by replacing the dependent variable $w(X,z,T)$ in terms of $r(X,z,T)$ i.e. to neglect $s(X,z,T)$ in the exact relationship $w_x = (r-s)_z$. For exceeding long waves the operator $rKr$ is asymptotically $r +r^3$. However, any constant coefficient self-adjoint linear operator $L_2$ with a long wave asymptote of unity can be used in a change of dependent variables $r=L_2R$, $w=L_2W$ to replace the cubic contributions $r^3$ by $R^3$. This leads to approximate $rKr$ by $RK_1L^2_2R + R^3$, where the constant coefficient self-adjoint operator $K_1$ will be chosen as some long wave approximation to the exact linear operator $K(0)$. The resulting Hamiltonian only involves the single dependent variable $R$. However, it is convenient to write it in terms of both $R$ and $W$ with the four linear operators $K_1, K_2, L_1$ and $L_2$. Thus, we obtain the following Hamiltonian
\begin{equation}H = \frac{1}{2} \iint \Big[RL_2(L^2_1 + K_1 -2UL_1)L_2R + R^3 + wK_2L^2_2W \Big] \,dXdz
\label{GeneralHamiltonian}
\end{equation}
The corresponding generalization of the KP equations are 
\begin{equation*}
\begin{cases}
L_1L^2_2(R_T - UR_X) + (\frac{3}{4}R^2 + \frac{L^2_1 + K_1}{2}L^2_2R)_X + \frac{1}{2}K_2L^2L_2W, &\\
W_X - R_z = 0. &\\
\end{cases}
\label{KP general}
\end{equation*}
One obtains the usual KP equations with 
\begin{equation} K_1 = 1- \partial^2_X, \quad K_2=L_1=L_2=1.
\label{KP Operators}
\end{equation}
Another interesting selection that is related to the BBM model is
\begin{equation}K_1 = \frac{1+ \frac{1}{6} \partial^2_X}{1- \frac{1}{6} \partial^2_X}, \quad K_2=L_1 =1, \quad L_2 = \sqrt{1 -\frac{1}{6}\partial^2_X}.
\label{BBM Operators}
\end{equation}
Retaining the dependent variables $R$ and $W$, the Hamiltonian \ref{GeneralHamiltonian} gives rise to the coupled evolution equations
\begin{equation}
\begin{cases}
L_1L^2_2(R_T - UR_X) + (\frac{3}{4}R^2 + \frac{L^2_1 + K_1}{2}L^2_2R)_X + \frac{1}{2}(K_2L^2_2W)_z=0, &\\
L_1L^2_2W_T + (\frac{3}{4}R^2 + \frac{L^2_1 + K_1}{2}L^2_2R)_z =0. &\\
\end{cases}
\label{NON-KP General}
\end{equation}
These are the Non-Kadomstev-Petviashvili (Non-KP) models. The selection (\ref{KP Operators}) in (\ref{NON-KP General}) leads to a system which has the form 
\begin{equation*}
\begin{cases}
R_T - UR_X + (\frac{3}{4}R^2 + \frac{L^2_1 + K_1}{2}L^2_2R)_X + \frac{1}{2}W_z=0, &\\
W_T + (\frac{3}{4}R^2 + \frac{L^2_1 + K_1}{2}L^2_2R)_z =0, &\\
\end{cases}
\end{equation*}
while the choice of (\ref{BBM Operators}) results in the following system
\begin{equation}
\begin{cases}
(1-\frac{1}{6} \partial^2_X)(R_T - UR_X) + (\frac{3}{4}R^2 + R)_X + \frac{1}{2}(1-\frac{1}{6} \partial^2_X)W_z=0, &\\
(1-\frac{1}{6} \partial^2_X)W_T + (\frac{3}{4}R^2 + R)_z =0. &
\label{non-kp General}
\end{cases}
\end{equation}
Dropping the various constants and letting $(R,W)=(u,v)$, one observes that system \ref{non-kp General} is the  generalized version of system \ref{NON-KP} in Section \ref{sec:introduction}. A point of 
departure from the usual KP models is that if $(u,v)$ is a solution to the linearized version of model \ref{non-kp General}, then the mass  $m(y,t) = \int_{\mathbb{R}} u dx$ is no longer independent of the transverse variable $y$. Indeed, $m$ satisfies the wave equation, as mentioned in Section \ref{sec:introduction}.
$$m_{tt} = m_{zz}, \quad \text{for} \quad (z,t) \in \mathbb{R} \times \mathbb{R}_+$$

Utilizing the same scheme as in Section \ref{sec:introduction}, one can calculate the dispersion relation for the general form of the Non-KP model \ref{NON-KP General}. For a small amplitude Fourier component proportional to $e^{i(\xi x+\mu z-\omega t)}$ we have that
\begin{displaymath}
 \begin{pmatrix}
   -2\omega+\xi\left(-2U \hat{L_1}+ \hat{L^2_1} + \hat{K_1}\right) & \mu\hat{K_2} \\[5pt]
   \mu\left( \hat{L^2_1} + \hat{K_1}\right) & -2\omega
  \end{pmatrix} \begin{pmatrix}
  \hat{R} \\[5pt]
  \hat{W} 
 \end{pmatrix} =\begin{pmatrix}
  0 \\[5pt]
  0 
 \end{pmatrix}.
\end{displaymath}
For a solution to exist we must have that 
\begin{equation*}
4\omega= \xi \left(-2U \hat{L_1}+ \hat{L^2_1}+ \hat{K_1}\right) \pm \sqrt{\xi^2 \left(-2U \hat{L_1}+ \hat{L^2_1} + \hat{K_1}\right)^2 +4\mu^2  \hat{K_2}  \left( \hat{L^2_1} + \hat{K_1}\right)}
\end{equation*}
Making the choice of regularized operators \ref{BBM Operators}, a straightforward calculation shows that the above generalized dispersion relation reduces to dispersion relation \ref{dispersionrelationNONKP}, namely
\begin{align*}
4\omega&= \xi \left(\hat{L^2_1}+ \hat{K_1}\right) \pm \sqrt{\xi^2 \left(\hat{L^2_1} + \hat{K_1}\right)^2 +4\mu^2  \hat{K_2}  \left( \hat{L^2_1} + \hat{K_1}\right)}
\\
&=\xi \left(1+\widehat{\left(\frac{1+\frac{1}{6}\partial^2_x}{1-\frac{1}{6}\partial^2_x}\right)}\right) \pm \sqrt{\xi^2 \left(  1+\widehat{\left(\frac{1+\frac{1}{6}\partial^2_x}{1-\frac{1}{6}\partial^2_x}\right)}^2\right) +4\mu^2 \left( 1 + \widehat{\left(\frac{1+\frac{1}{6}\partial^2_x}{1-\frac{1}{6}\partial^2_x}\right)}\right)}
\\
&= \xi \left(\frac{2}{1+ \frac{1}{6} \xi^2}\right) \pm \sqrt{\xi^2  \left(\frac{2}{1+ \frac{1}{6} \xi^2}\right)^2 +4\mu^2   \left(\frac{2}{1+ \frac{1}{6} \xi^2}\right)}
\\
&= \frac{2\xi}{1+ \frac{1}{6} \xi^2} \pm \sqrt{\frac{4\xi^2}{\left(1+ \frac{1}{6} \xi^2\right)^2}+\frac{4\cdot 2 \mu^2 \left(1+ \frac{1}{6} \xi^2\right)}{\left(1+ \frac{1}{6} \xi^2\right)^2}}
\\
&=2\left(\frac{\xi \pm \sqrt{\xi^2  + 2\mu^2\left(1+ \frac{1}{6} \xi^2\right)} }{1+ \frac{1}{6} \xi^2}\right)
\end{align*}
Thus, we arrive at equation \ref{dispersionrelationNONKP} up to a constant, i.e.
\begin{equation*}
\omega=\frac{1}{2}\left(\frac{\xi \pm \sqrt{\xi^2 + 2\mu^2\left(1+ \frac{1}{6} \xi^2\right)} }{1+ \frac{1}{6} \xi^2}\right).
\end{equation*}
Where the choice $U=0$ was made, as a straightforward calculation shows that the Hamiltonian of \ref{NON-KP General} is the same as the Hamiltonian of the system corresponding to $U=0$.
\section{Hamiltonian evolutionary structure and Diagonalization}
\label{sec:hamiltonian}
This section is focused on the Hamiltonian structure of system \ref{NON-KP}. We write the Non-KP model in the following form
\begin{equation}
 \partial_t\boldsymbol{\eta}-A \boldsymbol{\eta}+N(\boldsymbol{\eta})=0,
 \label{sys:ham}
\end{equation}
where 
\begin{displaymath}
 \boldsymbol{\eta}=\begin{pmatrix}
  u \\
  v
 \end{pmatrix}, \text{ } A=\begin{pmatrix}
  -(1-\partial_x^2)^{-1}\partial_x & -\partial_y\\[5pt]
   -(1-\partial_x^2)^{-1}\partial_y & 0 
 \end{pmatrix} \quad \text{and} \quad N(\boldsymbol{\eta})=\begin{pmatrix}
  (1-\partial_x^2)^{-1}uu_x\\[5pt]
   (1-\partial_x^2)^{-1}uu_y
 \end{pmatrix}.
\end{displaymath}
Let the skew-adjoint operator $J$ denote the structure map
\begin{displaymath}
  J=\begin{pmatrix}
   -(1-\partial_x^2)^{-1}\partial_x & -(1-\partial_x^2)^{-1}\partial_y\\[5pt]
   -(1-\partial_x^2)^{-1}\partial_y & 0
  \end{pmatrix}.
\end{displaymath}
Since $(1-\partial^2_x)^{-1}$ is a symmetric operator and $-\partial_x, -\partial_y$ are skew symmetric, it follows that $J$ is skew symmetric with respect to the scalar product $(\cdot, \cdot)$ on $L^2(\mathbb{R}^2; \mathbb{R}^2)$, defined as
$$(\boldsymbol{\eta}, \tilde{\boldsymbol{\eta}})= \int_{\mathbb{R}^2} u\tilde{u}+ v\tilde{v}\ dx dy.$$

Thus, system \ref{NON-KP} is Hamiltonian and is equivalent to 
\begin{displaymath}
 \partial_t\boldsymbol{\eta}=J(\text{grad} H)(\boldsymbol{\eta}),
\end{displaymath}
where $H(\boldsymbol{\eta})$ is the functional given by 
\begin{displaymath}
 H(\boldsymbol{\eta})=H(u,v)=\int_{\mathbb{R}^2}\left(\frac{v_x^2}{2}+\frac{v^2}{2}+\frac{u^2}{2}+\frac{u^3}{6}\right)d xd y.
\end{displaymath}

Upon taking variational derivatives, one can observe that the above Hamiltonian gives rise to system \ref{NON-KP}, viz. 
\begin{align*}
\partial_t\boldsymbol{\eta} & = J(\text{grad} H)(\boldsymbol{\eta})
\\
& = J(\text{grad} H)(u,v)
\\
& = \begin{pmatrix}
   -(1-\partial_x^2)^{-1}\partial_x & -(1-\partial_x^2)^{-1}\partial_y\\[5pt]
   -(1-\partial_x^2)^{-1}\partial_y & 0
  \end{pmatrix}  
 \begin{pmatrix}
    u+\frac{u^2}{2} \\[5pt]
   (1-\partial_x^2)v 
  \end{pmatrix}  
\\
& = \begin{pmatrix}
   -(1-\partial_x^2)^{-1}\partial_x \left(u+\frac{u^2}{2}\right)& -(1-\partial_x^2)^{-1}(1-\partial_x^2)\partial_yv \\[5pt]
   -(1-\partial_x^2)^{-1}\partial_y\left(u+\frac{u^2}{2}\right) & 0
  \end{pmatrix}
\\
& = \begin{pmatrix}
   -(1-\partial_x^2)^{-1}\partial_x \left(u+\frac{u^2}{2}\right)& -\partial_y v \\[5pt]
   -(1-\partial_x^2)^{-1}\partial_y\left(u+\frac{u^2}{2}\right) & 0
  \end{pmatrix}
\end{align*}
Therefore, we arrive at the operator form of system \ref{NON-KP}, viz.
\begin{equation*}
\begin{cases}
u_t= -\partial_x (1-\partial^2_x)^{-1} \left(u+\frac{u^2}{2}\right)-v_y , &\\
v_t=-\partial_y (1-\partial^2_x)^{-1} \left(u+\frac{u^2}{2}\right). &\\
\end{cases}
\label{NON-KP Operator}
\end{equation*}
As $H$ is the Hamiltonian for system \ref{NON-KP}, it directly follows that $H$ is conserved by the flow, i.e.
\begin{align*}
\frac{d}{dt} H(\boldsymbol{\eta}) & = H'(\boldsymbol{\eta})\boldsymbol{\eta}_t
\\
& = \left((\text{grad} H)(\boldsymbol{\eta}), \boldsymbol{\eta}_t \right)
\\
& = \left((\text{grad} H)(\boldsymbol{\eta}), J(\text{grad} H)(\boldsymbol{\eta})\right)
\\
& =0  \qquad \text{since $J$ is skew-adjoint}.
\end{align*}

Alternatively, the Hamiltonian of system \ref{NON-KP} can be obtained via a straightforward calculation.
\begin{equation*}
u_t + v_y +Qu_x +Quu_x= 0 \quad \text{for} \quad  Q:=(1 - \partial^2_x)^{-1},
\end{equation*}
or
\begin{equation*}
uu_t + uv_y +uQu_x +uQuu_x= 0.
\end{equation*}
We integrate over $\mathbb{R}^2$ to obtain the following string of implications
\begin{align*}
& \frac{d}{dt}\int_{\mathbb{R}^2}\frac{u^2}{2}\,dxdy  -\int_{\mathbb{R}^2}u_yv \,dxdy  + \int_{\mathbb{R}^2}uQuu_x\,dxdy =0. & \\
 &\frac{d}{dt}\int_{\mathbb{R}^2} \frac{v^2}{2} \,dxdy + \frac{d}{dt}\int_{\mathbb{R}^2} \frac{v^2_x}{2} \,dxdy + \int_{\mathbb{R}^2} vuu_y \,dxdy = -\int_{\mathbb{R}^2}u_yv \,dxdy. & \\
 &\frac{d}{dt}\int_{\mathbb{R}^2}\frac{u^2}{2}\,dxdy +  \frac{d}{dt}\int_{\mathbb{R}^2} \frac{v^2}{2} \,dxdy + \frac{d}{dt}\int_{\mathbb{R}^2} \frac{v^2_x}{2} \,dxdy + \int_{\mathbb{R}^2} vuu_y \,dxdy&
\\
& + \int_{\mathbb{R}^2}uQuu_x\,dxdy =0. \\
 \end{align*}
Therefore, 
\begin{align*}
&\frac{d}{dt}\int_{\mathbb{R}^2}\frac{u^2}{2}\,dxdy  + \frac{d}{dt}\int_{\mathbb{R}^2} \frac{v^2}{2} \,dxdy + \frac{d}{dt}\int_{\mathbb{R}^2} \frac{v^2_x}{2} \,dxdy -\int_{\mathbb{R}^2} \frac{u^2}{2}v_y \,dxdy &
\\ 
&  + \int_{\mathbb{R}^2}uQuu_x\,dxdy =0. \\
&\frac{d}{dt}\int_{\mathbb{R}^2}\frac{u^2}{2}\,dxdy +  \frac{d}{dt}\int_{\mathbb{R}^2} \frac{v^2}{2} \,dxdy + \frac{d}{dt}\int_{\mathbb{R}^2} \frac{v^2_x}{2} \,dxdy &
\\
&-\int_{\mathbb{R}^2} \frac{u^2}{2}(-u_t -Qu_x -Quu_x) \,dxdy  + \int_{\mathbb{R}^2}uQuu_x\,dxdy =0. 
\\
 &\frac{d}{dt} \int_{\mathbb{R}^2}\Big[\frac{u^2}{2} +\frac{v^2}{2} + \frac{v^2_x}{2}  + \frac{u^3}{6}\Big] \,dxdy +\int_{\mathbb{R}^2} \frac{u^2}{2}Qu_x\, dxdy &
\\
& +\int_{\mathbb{R}^2} \frac{u^2}{2}Quu_x \,dxdy  + \int_{\mathbb{R}^2}uQuu_x\,dxdy =0. 
\\
 &\frac{d}{dt} \int_{\mathbb{R}^2}\Big[\frac{u^2}{2} +\frac{v^2}{2} + \frac{v^2_x}{2}  + \frac{u^3}{6}\Big] \,dxdy &
\\
& +\int_{\mathbb{R}^2} \frac{u^2}{2}Qu_x\, dxdy +\int_{\mathbb{R}^2} \frac{u^2}{2}Q\frac{\partial}{\partial_x}\frac{u^2}{2} \,dxdy -\int_{\mathbb{R}^2}u_xQ\frac{u^2}{2}\,dxdy =0. \\
&\frac{d}{dt} \int_{\mathbb{R}^2}\Big[\frac{u^2}{2} +\frac{v^2}{2} + \frac{v^2_x}{2}  + \frac{u^3}{6}\Big] \,dxdy =0.
\end{align*}
% ===============================================
  
%%%%%%%%%%%%%%%%%%%%%%%%%%%%%%%%%%%%%%%%%%%%%
Before venturing into the task of obtaining linear estimates, we transform system \ref{NON-KP} into an equivalent system with a diagonal linear part. First, observe that the Fourier transform of $A$ is
\begin{displaymath}
\hat{A}(\xi,\mu) = i\begin{pmatrix}
 -\frac{\xi}{1+\xi^2} & -\mu \\[5pt]
  -\frac{\mu}{1+\xi^2} & 0
\end{pmatrix}.  
\end{displaymath}
Now we calculate the characteristic polynomial of $\hat{A}(\xi,\mu)$. Denote the identity matrix by $\mathcal{I}$, so that
\begin{align*}
\hat{A}(\xi,\mu)&= \det{\left(\hat{A}-\lambda  \mathcal{I} \right)}
\\
&=\lambda^2 +\frac{\xi i}{1+\xi^2} \lambda+ \frac{\mu^2}{1+\xi^2}
\end{align*}
The eigenvalues are 
\begin{displaymath}
\left(\lambda_1, \lambda_2  \right)=\left( -\omega_1 i, -\omega_2 i \right),
\end{displaymath}
as expected.
Next, we search for a nonzero $\boldsymbol{\eta}$, such that
\begin{displaymath}
 \left(\hat{A}-\lambda_1 \mathcal{I} \right)\boldsymbol{\eta} = \boldsymbol{0}.
\end{displaymath}
More explicitly, we need to solve the following matrix equation.
\begin{displaymath}
 \begin{pmatrix}
   \frac{\xi}{1+\xi^2}-\omega_1 & \mu\\[5pt]
   \frac{\mu}{1+\xi^2} & -\omega_1
  \end{pmatrix} \begin{pmatrix}
  u \\[5pt]
  v 
 \end{pmatrix} =\begin{pmatrix}
  0 \\[5pt]
  0 
 \end{pmatrix}.
\end{displaymath}
The solution that produces the first eigenvector, coresponding to eigenvalue $\lambda_1$ is
\begin{equation*}
u=\frac{\left( 1+\xi^2\right)\omega_1}{\mu}v
= \frac{\xi + \sqrt{\xi^2+4\mu^2\left(1+\xi^2\right)}}{2\mu}v.
\end{equation*}
Thus,
\begin{displaymath}
E_1=
  \begin{pmatrix}
  1\\[5pt]
  \frac{\mu}{\left(1+\xi^2\right)\omega_1} 
 \end{pmatrix}.
\end{displaymath}
A similar calculation for $\lambda_2$ yields
\begin{displaymath}
E_2=
  \begin{pmatrix}
 1\\[5pt]
   \frac{\mu}{\left(1+\xi^2\right)\omega_2}  
 \end{pmatrix}.
\end{displaymath}
Define $\hat{P}$ to be the matrix $\left(E_1,E_2\right)$, in the canonical basis, so that
\begin{displaymath}
\hat{P}=
  \begin{pmatrix}
  1&  1 \\[5pt]
 \frac{\mu}{\left(1+\xi^2\right)\omega_1}   & \frac{\mu}{\left(1+\xi^2\right)\omega_2}  
 \end{pmatrix} 
\end{displaymath}
and
\begin{displaymath}
\hat{P}^{-1}=
  \begin{pmatrix}
\frac{\omega_1}{\omega_1-\omega_2} & \frac{-\omega_1 \omega_2 \left(
1+\xi^2\right)}{\mu \left(\omega_1 - \omega_2  \right)}  \\[5pt]
\frac{-\omega_2}{\omega_1-\omega_2}& \frac{\omega_1 \omega_2 \left(
1+\xi^2\right)}{\mu \left(\omega_1 - \omega_2  \right)}
 \end{pmatrix}. 
\end{displaymath}
Let $\hat{D}$ denote the diagonal matrix of eigenvalues, i.e.
\begin{displaymath}
\hat{D} = \begin{pmatrix}
 \lambda_1 & 0 \\[5pt]
  0 & \lambda_2
\end{pmatrix},  
\end{displaymath} 
so that
\begin{align*}
\hat{A}& = \begin{pmatrix}
  1&  1 \\[5pt]
 \frac{\mu}{\left(1+\xi^2\right)\omega_1}   & \frac{\mu}{\left(1+\xi^2\right)\omega_2}  
 \end{pmatrix} 
   \begin{pmatrix}
 \lambda_1 & 0 \\[5pt]
  0 & \lambda_2
\end{pmatrix} 
 \begin{pmatrix}
\frac{\omega_1}{\omega_1-\omega_2} & \frac{-\omega_1 \omega_2 \left(
1+\xi^2\right)}{\mu \left(\omega_1 - \omega_2  \right)}  \\[5pt]
\frac{-\omega_2}{\omega_1-\omega_2}& \frac{\omega_1 \omega_2 \left(
1+\xi^2\right)}{\mu \left(\omega_1 - \omega_2  \right)}
 \end{pmatrix}&  
\\
& = \hat{P} \hat{D} \hat{P}^{-1}.&
\end{align*}
Placing this together, we write 
\begin{equation}
\partial_t \hat{\boldsymbol{\eta}} - \hat{A}\hat{\boldsymbol{\eta}} + \hat{N}(\hat{\boldsymbol{\eta}})=0,
\label{NON-KP_hat}
\end{equation}
and invoke the change of variables $\hat{\boldsymbol{\eta}}=\hat{P}\hat{\boldsymbol{w}}$, for
\begin{displaymath}
\boldsymbol{w} =
\begin{pmatrix}
  w_1 \\[5pt]
  w_2 
 \end{pmatrix}.
\end{displaymath}
As a result, equation \ref{NON-KP_hat} can be rewritten in the following form
\begin{equation*}
\partial_t \hat{\boldsymbol{w}} - \hat{P}^{-1}\hat{A}\hat{P}\hat{\boldsymbol{w}} +\hat{P}^{-1} \hat{N}(\hat{P}\hat{\boldsymbol{w}})=0,
\end{equation*}
or equivalently
\begin{equation*}
\partial_t \hat{\boldsymbol{w}} -  \hat{D} \hat{\boldsymbol{w}} +\hat{P}^{-1} \hat{N}(\hat{P}\hat{\boldsymbol{w}})=0.
\label{NON-KP_COV}
\end{equation*}
To calculate $\hat{P}^{-1} \hat{N}(\hat{P}\hat{\boldsymbol{w}})$, we define the polynomial $\Lambda_p(w_1,w_2)=(w_1+w_2)^2$. Thus,
\begin{align*}
\mathcal{F}_{x,y}\left[\Lambda_p(w_1,w_2) \right]
&=\left(\widehat{w_1 + w_2}\right)\ast \left(\widehat{w_1 + w_2}\right)
\\
%&=\left(\hat{w}_1\right)^2 + \left(\hat{w}_2\right)^2 +2\hat{w}_1 \hat{w}_2
%\\
%&=\widehat{w_1 \ast w_1} + \widehat{w_2 \ast w_2} +2\widehat{w_1 \ast w_2}\\
&:=\Lambda(\hat{w}_1, \hat{w}_2) 
\end{align*}
and it follows that
\begin{align*}
\hat{P}^{-1} \hat{N}(\hat{P}\hat{\boldsymbol{w}}) & = \frac{1}{2}\begin{pmatrix}
\frac{\omega_1}{\omega_1-\omega_2} & \frac{-\omega_1 \omega_2 \left(
1+\xi^2\right)}{\mu \left(\omega_1 - \omega_2  \right)}  \\[5pt]
\frac{-\omega_2}{\omega_1-\omega_2}& \frac{\omega_1 \omega_2 \left(
1+\xi^2\right)}{\mu \left(\omega_1 - \omega_2  \right)}
 \end{pmatrix} 
 \begin{pmatrix}
    \frac{\xi i}{\left(1+\xi^2 \right)} \left(\widehat{w_1 + w_2}\right)\ast \left(\widehat{w_1 + w_2}\right)  \\[5pt]
    \frac{\mu i}{\left(1+\xi^2 \right)} \left(\widehat{w_1 + w_2}\right)\ast \left(\widehat{w_1 + w_2}\right) 
  \end{pmatrix}  
\\
& = \begin{pmatrix}
\frac{\omega_1}{\omega_1-\omega_2} & \frac{-\omega_1 \omega_2 \left(
1+\xi^2\right)}{\mu \left(\omega_1 - \omega_2  \right)}  \\[5pt]
\frac{-\omega_2}{\omega_1-\omega_2}& \frac{\omega_1 \omega_2 \left(
1+\xi^2\right)}{\mu \left(\omega_1 - \omega_2  \right)}
 \end{pmatrix} 
\begin{pmatrix}
    \frac{\xi i}{\left(1+\xi^2 \right)} \Lambda(\hat{w}_1, \hat{w}_2) \\[5pt]
    \frac{\mu i}{\left(1+\xi^2 \right)} \Lambda(\hat{w}_1, \hat{w}_2) 
  \end{pmatrix}
\\
&= \begin{pmatrix}
    \frac{\xi i}{\left(1+\xi^2 \right)}\frac{\omega_1}{\left(\omega_1 - \omega_2  \right)} - \frac{\mu i}{\left(1+\xi^2 \right)}\frac{\omega_1 \omega_2 \left(1+\xi^2\right)}{\mu \left(\omega_1 - \omega_2  \right)} 
\\[5pt]
-\frac{\xi i}{\left(1+\xi^2 \right)}\frac{\omega_2}{\left(\omega_1 - \omega_2  \right)} + \frac{\mu i}{\left(1+\xi^2 \right)}\frac{\omega_1 \omega_2 \left(1+\xi^2\right)}{\mu \left(\omega_1 - \omega_2  \right)}        
  \end{pmatrix} 
  \begin{pmatrix}
     \Lambda(\hat{w}_1, \hat{w}_2) \\[5pt]
     \Lambda(\hat{w}_1, \hat{w}_2) 
  \end{pmatrix}
\\
&= \begin{pmatrix}
    \frac{\xi i}{\left(1+\xi^2 \right)}\frac{\omega_1}{\left(\omega_1 - \omega_2  \right)} - i\frac{\omega_1 \omega_2}{\left(\omega_1 - \omega_2  \right)} 
\\[5pt]
-\frac{\xi i}{\left(1+\xi^2 \right)}\frac{\omega_2}{\left(\omega_1 - \omega_2  \right)} + i\frac{\omega_1 \omega_2}{\left(\omega_1 - \omega_2  \right)}        
  \end{pmatrix} 
  \begin{pmatrix}
     \Lambda(\hat{w}_1, \hat{w}_2) \\[5pt]
     \Lambda(\hat{w}_1, \hat{w}_2)
  \end{pmatrix}
\\
&= \begin{pmatrix}
    i\frac{\xi  \omega_1}{\sqrt{\xi^2+4\mu^2\left(1+\xi^2\right)}} +i\frac{\mu^2}{\sqrt{\xi^2+4\mu^2\left(1+\xi^2\right)}}
\\[5pt]
-i\frac{\xi \omega_2}{\sqrt{\xi^2+4\mu^2\left(1+\xi^2\right)}} - i\frac{\mu^2}{\sqrt{\xi^2+4\mu^2\left(1+\xi^2\right)}}      
  \end{pmatrix} 
  \begin{pmatrix}
     \Lambda(\hat{w}_1, \hat{w}_2) \\[5pt]
     \Lambda(\hat{w}_1, \hat{w}_2)
  \end{pmatrix}
\\
&= \begin{pmatrix}
    i\frac{\xi\omega_1+\mu^2}{\sqrt{\xi^2+4\mu^2\left(1+\xi^2\right)}}
\\[5pt]
-i\frac{\xi \omega_2+\mu^2}{\sqrt{\xi^2+4\mu^2\left(1+\xi^2\right)}}
  \end{pmatrix} 
  \begin{pmatrix}
     \Lambda(\hat{w}_1, \hat{w}_2) \\[5pt]
     \Lambda(\hat{w}_1, \hat{w}_2) 
  \end{pmatrix}.
\end{align*}
In summary, system \ref{NON-KP} can be rewritten as
\begin{equation}
\begin{cases}
 \partial_t \hat{w}_1 - i\omega_1\hat{w}_1 +\hat{M}_1(\xi,\mu)\Lambda(\hat{w}_1, \hat{w}_2) = 0,&\\
 \partial_t \hat{w}_2 - i\omega_2 \hat{w}_2 +\hat{M}_2(\xi,\mu)\Lambda(\hat{w}_1, \hat{w}_2)= 0, &\\
\hat{w}_1(\xi,\mu,0) = {\hat{w_1}}_0, &\\
\hat{w}_2(\xi,\mu,0) = {\hat{w_2}}_0. &\\
\end{cases}
\label{NON-KP (explicitly in terms of w)}
\end{equation}
Where $M_1(\partial_x,\partial_y)$ and $M_2(\partial_x,\partial_y)$ are Fourier multipliers with symbols  $i\frac{\xi\omega_1+\mu^2}{\sqrt{\xi^2+4\mu^2\left(1+\xi^2\right)}}$ and $-i\frac{\xi \omega_2+\mu^2}{\sqrt{\xi^2+4\mu^2\left(1+\xi^2\right)}}$, respectively.
An application of Duhamel's formula allows us to write \ref{NON-KP (explicitly in terms of w)} in integral form.
\begin{equation}
\begin{cases}
 w_1(t)=S_1(t){w_1}_0-\displaystyle\int_{0}^{t} S_1(t-t')M_1(\partial_x,\partial_y)\Lambda_p(w_1,w_2)  dt',&\\
 w_2(t)=S_2(t){w_2}_0-\displaystyle\int_{0}^{t} S_2(t-t')M_2(\partial_x,\partial_y)\Lambda_p(w_1,w_2)  dt' . &\\
\end{cases}
\label{NON-KP (Duhamel)}
\end{equation}
Where $\mathcal{F}_{x,y}\left[S_i(t){w_i}_0\right]=e^{\omega_it}{\hat{w_i}}_0$ denotes the solution of the free Non-KP evolution, for  $i=1,2$.
\begin{remark}
If $(w_1,w_2)$ solves \ref{NON-KP (Duhamel)} locally, then it also solves \ref{NON-KP (explicitly in terms of w)} in a distributional sense. As a result, it also solves \ref{NON-KP}, in lieu of the change of variables. 
\end{remark}
\section{Linear Estimates in Bourgain spaces}
\label{sec:linear}
The first step in obtaining the estimates involves a localization in time. To this end, let $\psi(t) \in C_c^\infty(\mathbb{R})$ be a bump function on the real line such that supp $\psi \subset [-2,2]$, and $\psi=1$ on $[-1,1]$. Let $T \in \mathbb{R}_+$ and $\psi_T(t)=\psi(\frac{t}{T})$, since supp $\psi \subset [-2,2]$, it is a direct consequence that supp $\psi_T \subset [-2T,2T]$. Define the temporally truncated integral equation
\begin{equation}
\boldsymbol{\Omega}\begin{pmatrix}
    {w}_1 \\[5pt]
    {w}_2
  \end{pmatrix}
=  \psi(t)  \begin{pmatrix}
    S_1(t){w_1}_0 \\[5pt]
    S_2(t){w_2}_0
  \end{pmatrix}
-\psi_T(t) \displaystyle\int_{0}^{t}\begin{pmatrix}
  S_1(t-t')M_1(\partial_x,\partial_y)\Lambda_p(w_1,w_2) \\[5pt]
  S_2(t-t')M_2(\partial_x,\partial_y)\Lambda_p(w_1,w_2) 
  \end{pmatrix} dt'
  \label{NON-KP (Truncated Duhamel)}
\end{equation}
Utilizing the method in \cite{ginibre1995probleme}, we proceed to obtain linear estimates for the above integral equation in the Bourgain spaces $X_i^{s,b}$, for $i=1,2$.
\begin{lemma}
For $s \in \mathbb{R}$, it follows that
$$\norm{\psi(t) S_i(t){w_i}_0}_{X_i^{b,s}} \lesssim  \norm{{w_i}_0}_{Z^s} \quad \text{for}\quad i=1,2.$$
\label{lemma:firstterm}
\end{lemma}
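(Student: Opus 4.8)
The statement is the standard ``first term'' linear estimate for the homogeneous solution in a Bourgain space, so the plan is to reduce everything to an explicit computation on the Fourier side. First I would recall that, by definition of the free evolution, $\mathcal{F}_{x,y}[S_i(t){w_i}_0](\xi,\mu,t) = e^{it\omega_i(\xi,\mu)}\widehat{{w_i}_0}(\xi,\mu)$, so that multiplying by $\psi(t)$ and taking the full space-time Fourier transform gives
\begin{equation*}
\mathcal{F}\big[\psi(t)S_i(t){w_i}_0\big](\xi,\mu,\tau) = \widehat{\psi}\big(\tau-\omega_i(\xi,\mu)\big)\,\widehat{{w_i}_0}(\xi,\mu).
\end{equation*}
This is the key structural identity: the temporal multiplier depends only on the ``modulation'' variable $\tau-\omega_i(\xi,\mu)$, which is exactly the weight that appears in the $X_i^{b,s}$ norm.

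Next I would plug this into the definition of $\norm{\cdot}_{X_i^{b,s}}$ from Definition \ref{SobolevSpacesNONKP}, obtaining
\begin{equation*}
\norm{\psi(t)S_i(t){w_i}_0}_{X_i^{b,s}}^2 = \int_{\mathbb{R}^3} \japbrack{\xi}^4 \japbrack{\absval{\xi}+\absval{\mu}}^{2s}\japbrack{\tau-\omega_i(\xi,\mu)}^{2b}\,\big|\widehat{\psi}(\tau-\omega_i(\xi,\mu))\big|^2\,\big|\widehat{{w_i}_0}(\xi,\mu)\big|^2\,d\xi\,d\mu\,d\tau.
\end{equation*}
For each fixed $(\xi,\mu)$ I would change variables $\sigma = \tau-\omega_i(\xi,\mu)$ in the inner $\tau$-integral, which has Jacobian $1$ and decouples that integral entirely from $(\xi,\mu)$:
\begin{equation*}
\int_{\mathbb{R}}\japbrack{\tau-\omega_i(\xi,\mu)}^{2b}\big|\widehat{\psi}(\tau-\omega_i(\xi,\mu))\big|^2\,d\tau = \int_{\mathbb{R}}\japbrack{\sigma}^{2b}\big|\widehat{\psi}(\sigma)\big|^2\,d\sigma =: C_{\psi,b}.
\end{equation*}
Since $\psi \in C_c^\infty(\mathbb{R}) \subset S(\mathbb{R})$, its Fourier transform $\widehat{\psi}$ is again Schwartz, so $C_{\psi,b}$ is a finite constant for \emph{every} $b\in\mathbb{R}$ (in particular the decay of $\widehat\psi$ beats the polynomial growth $\japbrack{\sigma}^{2b}$). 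What remains is $C_{\psi,b}\int_{\mathbb{R}^2}\japbrack{\xi}^4\japbrack{\absval{\xi}+\absval{\mu}}^{2s}\big|\widehat{{w_i}_0}(\xi,\mu)\big|^2\,d\xi\,d\mu = C_{\psi,b}\norm{{w_i}_0}_{Z^s}^2$, which is precisely the claimed bound after taking square roots, with the implicit constant $\sqrt{C_{\psi,b}}$ depending only on $\psi$ and $b$ (both fixed).

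There is no real obstacle here; the only point requiring a word of care is the legitimacy of the calculation at the level of Schwartz data and its extension to the full space $Z^s$. I would first carry out the identity and estimate for ${w_i}_0 \in S(\mathbb{R}^2)$ (so that all Fourier transforms and Fubini applications are unambiguous), and then extend to general ${w_i}_0\in Z^s$ by density, using that $S(\mathbb{R}^2)$ is dense in $Z^s$ and that $X_i^{b,s}$ is by definition the completion of Schwartz functions under its norm. The ``hardest'' part is thus purely bookkeeping: being careful that the $\tau$-integral is finite for the prescribed (arbitrary real) value of $b$, which is immediate from $\widehat\psi\in S(\mathbb{R})$.
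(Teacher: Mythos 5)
Your proposal is correct and follows essentially the same route as the paper: compute $\mathcal{F}\big[\psi(t)S_i(t){w_i}_0\big]=\widehat{\psi}(\tau-\omega_i(\xi,\mu))\,\widehat{{w_i}_0}(\xi,\mu)$, observe that the temporal factor depends only on the modulation variable, and factor the $X_i^{b,s}$ norm into $\norm{{w_i}_0}_{Z^s}\cdot\norm{\psi}_{H^b}$ after the shift $\sigma=\tau-\omega_i(\xi,\mu)$. The only difference is cosmetic: you make the Fubini/density justification explicit, which the paper leaves implicit.
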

\begin{proof}
Let $\psi \in C_c^\infty(\mathbb{R})$, then
\begin{align*}
\norm{\psi(t) S_i(t){w_i}_0}_{X_i^{b,s}} &=\norm{\japbrack{\xi}^2\japbrack{\tau-\omega_i(\xi,\mu)}^b  \japbrack{\absval{\xi}+\absval{\mu}}^s \mathcal{F}_t \left[\psi(t) e^{\omega_it}{\hat{w_i}}_0(\xi,\mu) \right]}_{L_{\xi,\mu,\tau}^{2}} 
\\
&=\norm{\japbrack{\xi}^2\japbrack{\tau-\omega_i(\xi,\mu)}^b  \japbrack{\absval{\xi}+\absval{\mu}}^s {\hat{w_i}}_0(\xi,\mu) \mathcal{F}_t \left[\psi(t) e^{\omega_it}\right]}_{L_{\xi,\mu,\tau}^{2}}
\\
&=\norm{\japbrack{\xi}^2 \japbrack{\tau-\omega_i(\xi,\mu)}^b \japbrack{\absval{\xi}+\absval{\mu}}^s {\hat{w_i}}_0(\xi,\mu)  \hat{\psi}(\tau - \omega_i(\xi,\mu)}_{L_{\xi,\mu,\tau}^{2}}
\\
&=\norm{\japbrack{\xi}^2 \japbrack{\absval{\xi}+\absval{\mu}}^s {\hat{w_i}}_0(\xi,\mu) \norm{\japbrack{\beta}^b \hat{\psi}(\beta)}_{L_{\beta}^{2}}}_{L_{\xi,\mu}^{2}}
\\
&=\norm{{w_i}_0}_{Z^{s}} \cdot \norm{\psi}_{H^{b}} 
\\
& \leq C \norm{{w_i}_0}_{Z^{s}} 
\end{align*}
\end{proof}
For the Duhamel term of equation \ref{NON-KP (Truncated Duhamel)}, we have the following estimates.
\begin{theorem}
 Let $\epsilon \in \left(0,\frac{1}{4}\right)$, $b=\frac{1}{2}+\epsilon$, $b'=\frac{1}{2}-2\epsilon$ and define $F_i(w_1(t'),w_2(t'))= M_i(\partial_x,\partial_y)\Lambda_p(w_1,w_2)$ for $i=1,2$. If $F_i \in X_i^{-b',s}$, then the following estimate holds.
 \begin{displaymath}
  \norm{\psi\displaystyle\int_{0}^{t} S_i(t-t')F_i(w_1(t'),w_2(t'))dt'}_{X_i^{b,s}}\lesssim  \norm{F_i(w_1,w_2)}_{X_i^{-b',s}}
 \end{displaymath}
 Furthermore, for any $T \in \mathbb{R}_+$ it follows that
 \begin{displaymath}
  \norm{\psi_T\displaystyle\int_{0}^{t} S_i(t-t')F_i(w_1(t'),w_2(t'))dt'}_{X_i^{b,s}}\lesssim T^\epsilon \norm{F_i(w_1,w_2)}_{X_i^{-b',s}}
 \end{displaymath}
 \label{main lemma}
\end{theorem}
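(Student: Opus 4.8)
The plan is to follow the standard Bourgain-space machinery for estimating the Duhamel (inhomogeneous) term, as developed in \cite{ginibre1995probleme} and \cite{tao2006nonlinear}, adapted to the dispersion relations $\omega_i(\xi,\mu)$ and the weight $\japbrack{\xi}^2\japbrack{|\xi|+|\mu|}^s$ that defines $X_i^{b,s}$. Since the spatial weight $\japbrack{\xi}^2\japbrack{|\xi|+|\mu|}^s$ is a fixed Fourier multiplier in $(\xi,\mu)$ that commutes with the time-translation semigroup $S_i(t)$ and with multiplication by $\psi(t)$, it factors cleanly out of every estimate; consequently the entire problem reduces to the scalar one-dimensional-in-time statement
\[
 \Big\| \psi(t)\int_0^t S_i(t-t') g(t')\,dt' \Big\|_{\japbrack{\tau-\omega_i}^{b}L^2_\tau} \lesssim \|g\|_{\japbrack{\tau-\omega_i}^{-b'}L^2_\tau},
\]
uniformly in the frozen parameters $(\xi,\mu)$, where $g$ is the partial Fourier transform in $(x,y)$ of $F_i$. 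After the change of variables $\tau \mapsto \tau - \omega_i(\xi,\mu)$ this is exactly the classical estimate for the operator $\psi(t)\int_0^t (\cdot)\,dt'$ acting between $H^{-b'}_t$ and $H^{b}_t$ with the admissible exponents $b = \tfrac12+\epsilon > \tfrac12$ and $b' = \tfrac12 - 2\epsilon \in (0,\tfrac12)$, so that $b + b' = 1 - \epsilon < 1$ and $b' < \tfrac12 < b$.

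The key steps, in order, are: (1) Use the diagonalization and Duhamel formula \eqref{NON-KP (Duhamel)} to write the object of interest, take the space-time Fourier transform, and peel off the spatial weight as above. (2) Reduce to the time-only kernel estimate by recording the standard lemma (see \cite{ginibre1995probleme}, or \cite[Lemma~2.11]{tao2006nonlinear}): for $-\tfrac12 < b' < \tfrac12 < b$ with $b + b' \le 1$, one has $\|\psi \int_0^t g(t')\,dt'\|_{H^b_t} \lesssim \|g\|_{H^{b'-1}_t}$; applying this with $g$ replaced by the inverse-transformed multiplier data and noting $H^{-b'}_t$ corresponds to the $X^{-b',s}_i$ weight $\japbrack{\tau-\omega_i}^{-b'}$ (using $-1 + (1-b') = -b'$) gives the first inequality. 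The proof of that time lemma itself splits the Duhamel operator into a low-modulation piece (where $|\tau - \omega_i|\lesssim 1$, handled by Taylor-expanding $\psi$ and bounding term by term) and a high-modulation piece (where one integrates by parts / uses the algebraic identity $\frac{1}{\tau-\omega_i} - \frac{1}{\tau'-\omega_i}$ type decompositions), which is why the constraint $b+b'<1$, i.e. $\epsilon>0$, is needed. (3) For the second, time-localized inequality, replace $\psi$ by $\psi_T$ and exploit the scaling: the gain of $T^\epsilon$ comes from the elementary bound $\|\psi_T\|_{H^{b}_t}\lesssim T^{1/2-b} = T^{-\epsilon}$ being compensated, via an interpolation between $H^{b}$ and a lower-regularity space together with the trivial $L^2$ bound $\|\psi_T f\|_{L^2}\le \|f\|_{L^2}$, to yield the net factor $T^{(1/2-b')-(1/2-b)} = T^{b-b'} $... more precisely, one uses the standard estimate $\|\psi_T u\|_{X^{b,s}}\lesssim T^{b''-b}\|u\|_{X^{b'',s}}$ valid for $-\tfrac12<b\le b''<\tfrac12$ applied with $b'' $ chosen so that $b'' - b = \epsilon$ after combining with the first inequality and the fact that $(1-b')-b = \epsilon$; I would state this as a short lemma and invoke it.

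The main obstacle is purely bookkeeping rather than conceptual: one must verify that the \emph{singular} multipliers $M_i(\partial_x,\partial_y)$ with symbols $\pm i(\xi\omega_i + \mu^2)/\sqrt{\xi^2+4\mu^2(1+\xi^2)}$ do not interfere with the argument. Here the point is that $F_i = M_i(\partial_x,\partial_y)\Lambda_p(w_1,w_2)$ is \emph{given} as an element of $X_i^{-b',s}$ in the hypothesis, so the multiplier has already been absorbed into $F_i$ and plays no role in the linear estimate itself; I will remark on this explicitly so the reader sees the theorem is genuinely a clean linear statement, with the burden of controlling $M_i$ and the convolution $\Lambda_p$ deferred to the (nonlinear) bilinear estimates. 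The only genuine care needed is uniformity of all constants in the frozen $(\xi,\mu)$, which holds because the time lemma's constant depends only on $b,b',\psi$, and on the translation parameter $\omega_i(\xi,\mu)$ not at all (translation invariance of $\|\cdot\|_{H^b_t}$). I would then conclude by reassembling: square, integrate $d\xi\,d\mu$, and restore the weight to recover the $X_i^{b,s}\to X_i^{-b',s}$ bound.
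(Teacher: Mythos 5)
Your overall strategy is the same as the paper's: conjugate by the free group (using $\norm{u}_{X_i^{b,s}}=\norm{S_i(-t)u}_{H^{b,s}}$) so that the spatial weight $\japbrack{\xi}^2\japbrack{\absval{\xi}+\absval{\mu}}^s$ factors out, freeze $(\xi,\mu)$, reduce to the one-dimensional-in-time Duhamel estimate, and prove that estimate by splitting $\frac{e^{it\tau}-1}{i\tau}$ into a low-modulation piece $\absval{\tau}\leq\frac{1}{T}$ (Taylor expansion of the exponential) and high-modulation pieces $\absval{\tau}\geq\frac{1}{T}$. This is exactly Lemma \ref{long lemma} and the conjugation "trick" in the paper, and your remark that $M_i$ and $\Lambda_p$ are already absorbed into the hypothesis $F_i\in X_i^{-b',s}$ is correct and worth making explicit. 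So the first inequality of the theorem is in good shape.

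The one step I would not accept as written is your derivation of the $T^{\epsilon}$ factor. You propose to first prove the $T$-independent bound with $\psi$ and then recover $T^{\epsilon}$ from a time-localization lemma of the form $\norm{\psi_T u}_{X^{b,s}}\lesssim T^{b''-b}\norm{u}_{X^{b'',s}}$, which you yourself state is valid only for $-\tfrac12<b\le b''<\tfrac12$. Here the target exponent is $b=\tfrac12+\epsilon>\tfrac12$, so that lemma does not apply, and the interpolation with the trivial $L^2$ bound that you sketch does not obviously close for exponents above $\tfrac12$ (multiplication by a cutoff ceases to gain powers of $T$ once the Sobolev index exceeds $\tfrac12$). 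The clean fix — and what the paper actually does — is to carry the $T$-dependence through the modulation splitting itself: each of the three pieces is bounded by $T^{1-b-b'}\norm{f}_{H^{-b'}}=T^{\epsilon}\norm{f}_{H^{-b'}}$, using $\norm{\psi_T}_{H^{b}}\lesssim T^{\frac12-b}$ (the paper's Lemma \ref{lemma:psiT}) for the cutoff, the measure $\lesssim T^{-1}$ of the region $\absval{\tau}\leq\frac{1}{T}$ for the low-modulation term, and $\sup_{\absval{\tau}\geq\frac{1}{T}}\absval{\tau}^{b+b'-1}\lesssim T^{1-b-b'}$ together with the convergence of $\int_{\absval{\beta}\geq1}\beta^{-1-4\epsilon}d\beta$ for the high-modulation terms; the latter is where the hypothesis $\epsilon>0$ (i.e. $b+b'<1$) is genuinely used. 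With that substitution your argument matches the paper's proof.
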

The proof of Theorem \ref{main lemma} is accomplished by the following lemmas and a trick. Firstly, we proceed by proving the following lemma.
%%%%
\begin{lemma}
For $b=\frac{1}{2}+\epsilon$ and all $T \in \mathbb{R}_+$, it follows that
\begin{displaymath}
\norm{\psi_T}_{H^b} \lesssim T^{\frac{1}{2}-b} 
\end{displaymath}
 \label{lemma:psiT}
\end{lemma}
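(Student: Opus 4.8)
The plan is to reduce everything to the dilation behaviour of the Fourier transform. First I would record that, since $\psi_T(t)=\psi(t/T)$, the dilation rule gives $\widehat{\psi_T}(\tau)=T\,\hat\psi(T\tau)$. Inserting this into the norm of Definition \ref{SobolevSpacesNONKP},
\[
\norm{\psi_T}_{H^b}^2=\int_{\mathbb R}\japbrack{\tau}^{2b}\,T^2\absval{\hat\psi(T\tau)}^2\,d\tau ,
\]
and then the substitution $s=T\tau$ (so $d\tau=T^{-1}\,ds$) turns the right-hand side into $T\displaystyle\int_{\mathbb R}\bigl(1+s^2/T^2\bigr)^b\absval{\hat\psi(s)}^2\,ds$.

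The only step that is not purely cosmetic is controlling the dilated Japanese bracket $(1+s^2/T^2)^b$. Working in the regime $0<T\le 1$ --- the only one that is needed, since the truncation time in the contraction scheme may always be taken $\le 1$ --- we have $1\le T^{-2}$, hence $1+s^2/T^2\le T^{-2}(1+s^2)=T^{-2}\japbrack{s}^2$, and since $b=\tfrac12+\epsilon>0$ the map $x\mapsto x^b$ is increasing, so $(1+s^2/T^2)^b\le T^{-2b}\japbrack{s}^{2b}$. Feeding this back in,
\[
\norm{\psi_T}_{H^b}^2\le T^{1-2b}\int_{\mathbb R}\japbrack{s}^{2b}\absval{\hat\psi(s)}^2\,ds=T^{1-2b}\norm{\psi}_{H^b}^2 .
\]
Because $\psi\in C_c^\infty(\mathbb R)\subset S(\mathbb R)$, the quantity $\norm{\psi}_{H^b}$ is a finite constant depending only on $\psi$ and $b$; taking square roots gives $\norm{\psi_T}_{H^b}\le C_{\psi,b}\,T^{1/2-b}$, which is the claim.

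I do not expect a genuine obstacle here --- the computation is routine scaling. The single point that demands attention is the inequality $1+s^2/T^2\le T^{-2}\japbrack{s}^2$, which holds precisely because $T\le 1$; for $T>1$ the bound in fact fails (the $L^2$ mass of $\psi_T$ grows like $T^{1/2}$ while $T^{1/2-b}\to0$), so the lemma --- and its later use inside the proof of Theorem \ref{main lemma} --- should be read with $T$ confined to $(0,1]$, which is harmless for the local well-posedness argument. If one wished to display the scaling more transparently one could instead split $\norm{\psi_T}_{H^b}^2\approx\norm{\psi_T}_{L^2}^2+\norm{\,\absval{\tau}^b\widehat{\psi_T}\,}_{L^2}^2$ and use the exact identities $\norm{\psi_T}_{L^2}=T^{1/2}\norm{\psi}_{L^2}$ and $\norm{\absval{\tau}^b\widehat{\psi_T}}_{L^2}=T^{1/2-b}\norm{\absval{\tau}^b\hat\psi}_{L^2}$, combining the two pieces via $T^{1/2}\le T^{1/2-b}$ for $0<T\le1$.
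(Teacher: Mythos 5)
Your proof is correct and follows essentially the same route as the paper: the dilation identity $\widehat{\psi_T}(\tau)=T\hat\psi(T\tau)$, the substitution $s=T\tau$, and the bound $\bigl(1+s^2/T^2\bigr)^b\le T^{-2b}\japbrack{s}^{2b}$, concluding with $\norm{\psi}_{H^b}<\infty$ since $\psi\in S(\mathbb{R})$. Your remark that this last inequality (and hence the lemma, which as stated claims all $T\in\mathbb{R}_+$) requires $T\le 1$ is a genuine and worthwhile refinement of the paper's argument, which elides this point and also writes $\gamma^{2b}$ where $\japbrack{\gamma}^{2b}$ is needed to avoid trouble at $\gamma=0$.
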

\begin{proof}
Let $\psi_T(t):=\psi(\frac{t}{T})\in C_c^\infty(\mathbb{R})$, then
\begin{align*}
\norm{\psi_T}_{H^b} &= \left(\displaystyle\int_{\mathbb{R}} \japbrack{\tau}^b T^2 {\hat{\psi}}^2(T\tau) d\tau\right)^\frac{1}{2},
\\
&= \left(\displaystyle\int_{\mathbb{R}} \left(1+\tau^2\right)^b T^2 {\hat{\psi}}^2(T\tau) d\tau\right)^\frac{1}{2}.
\end{align*}
We make the change of variables $\gamma=T\tau$, so that $d\gamma=Td\tau$, to obtain
\begin{align*}
&= \left(\displaystyle\int_{\mathbb{R}} \left(1+\frac{\gamma^2}{T^2}\right)^b T {\hat{\psi}}^2(\gamma) d\gamma\right)^\frac{1}{2}
\\
& \leq C\left(\displaystyle\int_{\mathbb{R}} \frac{\gamma^{2b}}{T^{2b}} T {\hat{\psi}}^2(\gamma) d\gamma\right)^\frac{1}{2}
\\
& = C\left(\displaystyle\int_{\mathbb{R}} \gamma^{2b} T^{-2b+1}{\hat{\psi}}^2(\gamma) d\gamma\right)^\frac{1}{2}
\\
& = CT^{\frac{1}{2}-b} \left(\displaystyle\int_{\mathbb{R}} \gamma^{2b}{\hat{\psi}}^2(\gamma) d\gamma\right)^\frac{1}{2}
\\
& \leq C T^{\frac{1}{2}-b} \quad \text{since $C_c^\infty(\mathbb{R}) \subset \mathcal{S}(\mathbb{R})$}.
\end{align*}
\end{proof}

\begin{lemma}
 Let $\epsilon \in \left(0,\frac{1}{4}\right)$, $b=\frac{1}{2}+\epsilon$ and $b'=\frac{1}{2}-2\epsilon$. Then for $f \in H^{-b'}(\mathbb{R})$
 \begin{displaymath}
  \norm{\psi_T\displaystyle\int_{0}^{t} f(t')dt'}_{H^{b}}\lesssim T^\epsilon \norm{f}_{H^{-b'}}
 \end{displaymath}
 \label{long lemma}
\end{lemma}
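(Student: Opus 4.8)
The plan is to pass to the time-Fourier side and run the standard argument of \cite{ginibre1995probleme}. By density it suffices to take $f\in\mathcal{S}(\mathbb{R})$, and it is enough to treat $0<T\le 1$. Fourier inversion together with Fubini gives $\int_0^t f(t')\,dt'=\frac{1}{2\pi}\int_{\mathbb{R}}\frac{e^{it\tau}-1}{i\tau}\,\hat f(\tau)\,d\tau$, and I would split this integral into a ``low'' part over $|\tau|\le 1/T$ and a ``high'' part over $|\tau|>1/T$, multiply each by $\psi_T$, and bound the two pieces in $H^b$ separately. Two inputs get used repeatedly: Cauchy--Schwarz against the weight $\japbrack{\tau}^{-b'}$, which turns any integral of $|\hat f(\tau)|$ against a weight into $\norm{f}_{H^{-b'}}$ times an explicit $\tau$-integral; and Lemma \ref{lemma:psiT}, which gives $\norm{\psi_T}_{H^b}\lesssim T^{1/2-b}=T^{-\epsilon}$ and, by the same rescaling applied to $s^k\psi(s)$, also $\norm{t^k\psi_T}_{H^b}\lesssim T^{k+1/2-b}A_k$ with $A_k:=\norm{\japbrack{\gamma}^{b}\,\widehat{s^k\psi}(\gamma)}_{L^2_\gamma}$.

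For the low part I would Taylor expand $e^{it\tau}-1=\sum_{k\ge1}(it\tau)^k/k!$, turning that contribution into the series $\sum_{k\ge1}\frac{i^{k-1}}{2\pi\,k!}\,c_k\,t^k\psi_T(t)$ with $c_k:=\int_{|\tau|\le 1/T}\tau^{k-1}\hat f(\tau)\,d\tau$. Cauchy--Schwarz, using $\japbrack{\tau}^{2b'}\lesssim T^{-2b'}$ on $|\tau|\le 1/T$, gives $|c_k|\lesssim T^{1/2-k-b'}\norm{f}_{H^{-b'}}$, so the $H^b$-norm of the $k$-th term is $\lesssim\frac{A_k}{k!}\,T^{(k+1/2-b)+(1/2-k-b')}\norm{f}_{H^{-b'}}=\frac{A_k}{k!}\,T^{1-b-b'}\norm{f}_{H^{-b'}}=\frac{A_k}{k!}\,T^{\epsilon}\norm{f}_{H^{-b'}}$. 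Since $s^k\psi$ is supported in $[-2,2]$ one has $A_k\lesssim k\,2^k$, hence $\sum_k A_k/k!<\infty$ and the low part is $\lesssim T^{\epsilon}\norm{f}_{H^{-b'}}$.

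For the high part I would split $\frac{e^{it\tau}-1}{i\tau}=\frac{e^{it\tau}}{i\tau}-\frac{1}{i\tau}$. The $-\frac{1}{i\tau}$ term produces a constant multiple $c\,\psi_T(t)$ with $c:=\frac{1}{2\pi}\int_{|\tau|>1/T}(i\tau)^{-1}\hat f(\tau)\,d\tau$; since $b'<\frac12$ the integral $\int_{|\tau|>1/T}\japbrack{\tau}^{2b'}\tau^{-2}\,d\tau\lesssim T^{4\epsilon}$ converges, so $|c|\lesssim T^{2\epsilon}\norm{f}_{H^{-b'}}$ and $\norm{c\psi_T}_{H^b}=|c|\norm{\psi_T}_{H^b}\lesssim T^{2\epsilon}T^{-\epsilon}\norm{f}_{H^{-b'}}=T^{\epsilon}\norm{f}_{H^{-b'}}$. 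The $\frac{e^{it\tau}}{i\tau}$ term equals $\psi_T(t)\,h(t)$ where $\widehat h(\tau)=(i\tau)^{-1}\hat f(\tau)$ on $\{|\tau|>1/T\}$ and $0$ elsewhere; the pointwise bound $\japbrack{\tau}^{2b}\tau^{-2}\lesssim T^{2\epsilon}\japbrack{\tau}^{-2b'}$ on $\{|\tau|>1/T\}$ yields $\norm{h}_{H^b}\lesssim T^{\epsilon}\norm{f}_{H^{-b'}}$, while $\norm{h}_{L^\infty}\le\norm{\widehat h}_{L^1}\lesssim T^{2\epsilon}\norm{f}_{H^{-b'}}$ by the same $\tau$-integral as for $c$. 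The fractional Leibniz rule $\norm{\psi_T h}_{H^b}\lesssim\norm{\psi_T}_{L^\infty}\norm{h}_{H^b}+\norm{\psi_T}_{H^b}\norm{h}_{L^\infty}\lesssim T^{\epsilon}\norm{f}_{H^{-b'}}+T^{-\epsilon}T^{2\epsilon}\norm{f}_{H^{-b'}}$ then finishes the high part, and adding it to the low part proves the lemma (which in turn feeds into the second estimate of Theorem \ref{main lemma}).

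The crux is the exponent bookkeeping: every power of $T$ generated along the way must recombine to exactly $T^{1-b-b'}=T^{\epsilon}$, and the hypothesis $0<\epsilon<\tfrac14$ is precisely what places $b'=\tfrac12-2\epsilon$ in $(0,\tfrac12)$. The bound $b'<\tfrac12$ (i.e. $\epsilon>0$) is indispensable: it is what makes the high-frequency integral $\int_{|\tau|>1/T}\japbrack{\tau}^{2b'}\tau^{-2}\,d\tau$ converge and carry the gain $T^{4\epsilon}$ that beats the loss $\norm{\psi_T}_{H^b}\lesssim T^{-\epsilon}$, whereas $\epsilon<\tfrac14$ keeps $b'>0$ so that $H^{-b'}$ is the intended negative-index space. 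The remaining delicate point is the convergence of the Taylor series in the low region, which works only because $\psi$ is a fixed cutoff, so that the seminorms $A_k$ of $s^k\psi$ grow merely like $k\,2^k$ and are crushed by the $k!$.
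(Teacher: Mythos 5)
Your proposal is correct and follows essentially the same route as the paper: the identity $\int_0^t f = \tfrac{1}{2\pi}\int \tfrac{e^{it\tau}-1}{i\tau}\hat f(\tau)\,d\tau$, the split at $|\tau|=1/T$ into the Taylor-expanded low piece and the two high pieces $-\tfrac{1}{i\tau}$ and $\tfrac{e^{it\tau}}{i\tau}$, and the same exponent bookkeeping landing on $T^{1-b-b'}=T^{\epsilon}$ via Lemma \ref{lemma:psiT} and Cauchy--Schwarz against $\japbrack{\tau}^{-b'}$. The only cosmetic divergences are that you invoke the Kato--Ponce product estimate for $\psi_T h$ where the paper proves that exact instance by hand using $\japbrack{\tau}^b\lesssim\japbrack{\tau-\tau'}^b+\japbrack{\tau'}^b$ and Young's convolution inequality, and that your bound $A_k\lesssim k\,2^k$ makes explicit the $k$-dependence of constants that the paper leaves implicit when summing the Taylor series.
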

\vspace{-25pt}
\begin{proof}
Firstly,
\begin{align*}
\psi_T\displaystyle\int_{0}^{t}f(t')dt' &= \psi_T\displaystyle\int_{0}^{t}\displaystyle\int_{\mathbb{R}}e^{it\tau}\hat{f}(\tau)d\tau dt'
\\
&=\psi_T\displaystyle\int_{\mathbb{R}}\displaystyle\int_{0}^{t}e^{it\tau}\hat{f}(\tau)dt'd\tau 
\\
&=\psi_T\displaystyle\int_{\mathbb{R}}\hat{f}(\tau)\displaystyle\int_{0}^{t}e^{it\tau}dt'd\tau 
\\
&=\psi_T\displaystyle\int_{\mathbb{R}}\frac{e^{it\tau}-1}{i\tau}\hat{f}(\tau)d\tau
\\
&=\psi_T\displaystyle\int_{\absval{\tau}\leq\frac{1}{T}}\frac{e^{it\tau}-1}{i\tau}\hat{f}(\tau)d\tau-\psi_T\displaystyle\int_{\absval{\tau}\geq\frac{1}{T}}\frac{\hat{f}(\tau)}{i\tau}d\tau+\psi_T\displaystyle\int_{\absval{\tau}\geq\frac{1}{T}}\frac{e^{it\tau}}{i\tau}\hat{f}(\tau)d\tau
\\
&=:\displaystyle\sum_{j=1}^{3} I_j 
\end{align*}
We proceed by estimating each integral $I_j$ for  $j=1,2,3.$ Since
\begin{align*}
\frac{e^{it\tau}-1}{i\tau}=& \frac{e^{it\tau}}{i\tau}-\frac{1}{i\tau}=\sum_{k=0}^{\infty}\frac{\left(it\tau\right)^k}{k!\left(i\tau\right)}-\frac{1}{i\tau}\\
=&\sum_{k=0}^{\infty}\frac{t^k}{k!}\left(i\tau\right)^{k-1}-\frac{1}{i\tau}=\sum_{k=1}^{\infty}\frac{t^k}{k!}\left(i\tau\right)^{k-1},\\
\end{align*}
it follows that
\begin{displaymath}
I_1 =\psi_T\displaystyle\int_{\absval{\tau}\leq\frac{1}{T}}\displaystyle\sum_{k=1}^{\infty}\frac{t^k}{k!}\left(i\tau\right)^{k-1}\hat{f}(\tau)d\tau. 
\end{displaymath}
Accordingly, for $I_1$ we have
\begin{align*}
\norm{I_1}_{H^b} &=\norm{\japbrack{\tau}^b \mathcal{F}_t \left[\displaystyle\int_{\absval{\tau}\leq\frac{1}{T}}\displaystyle\sum_{k=1}^{\infty}\frac{t^k\psi_T}{k!}\left(i\tau\right)^{k-1}\hat{f}(\tau)d\tau \right]}_{L_{\tau}^{2}} 
\\
&=\norm{\japbrack{\tau}^b \displaystyle\int_{\mathbb{R}}e^{-it\tau} \left(\displaystyle\sum_{k=1}^{\infty}\frac{t^k\psi_T}{k!}\displaystyle\int_{\absval{\tau'}\leq\frac{1}{T}}\left(i\tau'\right)^{k-1}\hat{f}(\tau')d\tau' \right)dt}_{L_{\tau}^{2}} 
\\
\end{align*}
Since $\absval{\tau'} \leq T^{-1}$ implies that $\absval{i\tau'}^{k-1} \leq T^{1-k}$, it follows that the above norm is bounded above by
\begin{align*}
&\leq \norm{\japbrack{\tau}^b \displaystyle\int_{\mathbb{R}}e^{-it\tau} \left(\displaystyle\sum_{k=1}^{\infty}\frac{t^k\psi_T}{k!}T^{1-k}\displaystyle\int_{\absval{\tau'}\leq\frac{1}{T}}\hat{f}(\tau')d\tau' \right)dt}_{L_{\tau}^{2}} 
\\
& \leq \norm{\japbrack{\tau}^b \displaystyle\mathcal{F}_t\left[\displaystyle\sum_{k=1}^{\infty}\frac{t^k\psi_T}{k!}T^{1-k}\right]}_{L_{\tau}^{2}} \cdot \displaystyle\int_{\absval{\tau'}\leq\frac{1}{T}}\absval{\hat{f}(\tau')}d\tau'
\\
\end{align*}

\vspace{35pt}

Due to a combination of Minkowski's and H\"older's inequality and the definition of the $H^b$ norm it follows that
\begin{align*}
&\leq \displaystyle\sum_{k=1}^{\infty}\norm{\frac{t^k\psi_T}{k!}T^{1-k}}_{H^{b}}\cdot \displaystyle\int_{\absval{\tau'}\leq\frac{1}{T}}\absval{\hat{f}(\tau')}d\tau'\\
&=\displaystyle\sum_{k=1}^{\infty}\norm{\frac{t^k\psi_T}{k!}}_{H^{b}}\cdot T^{1-k} \displaystyle\int_{\absval{\tau'}\leq\frac{1}{T}}\japbrack{\tau'}^{-b'}\absval{\hat{f}(\tau')}\japbrack{\tau'}^{b'}d\tau'\\
&\leq \displaystyle\sum_{k=1}^{\infty}\norm{\frac{t^k\psi_T}{k!}}_{H^{b}} \cdot \left(\displaystyle\int_{\absval{\tau'}\leq\frac{1}{T}}\japbrack{\tau'}^{2b'}d\tau'\right)^\frac{1}{2} \cdot T^{1-k}\norm{f}_{H^{-b'}} \\
&:=A_1A_2T^{1-k}\norm{f}_{H^{-b'}}
\end{align*}

\vspace{35pt}

It directly follows from lemma \ref{lemma:psiT} that $\norm{t^k \psi_T}_{H^{b}} \leq CT^{k+\frac{1}{2}-b}$, thus
\begin{displaymath}
A_1=\displaystyle\sum_{k=1}^{\infty}\norm{\frac{t^k\psi_T}{k!}}_{H^{b}} \leq \displaystyle\sum_{k=1}^{\infty}\frac{C}{k!}T^{k+\frac{1}{2}-b}.
\end{displaymath}
%%%%
To bound $A_2$, we make use of the fact that $\japbrack{x}^s := \left( 1+x^2 \right)^\frac{s}{2}$ to obtain
\begin{align*}
A_2 &= \left(\displaystyle\int_{\absval{\tau'}\leq\frac{1}{T}}\japbrack{\tau'}^{2b'}d\tau'\right)^\frac{1}{2}
\\
& \leq \left(\displaystyle\int_{\absval{\tau'}\leq\frac{1}{T}}\left(1+\frac{1}{T^2}\right)^{b'}d\tau'\right)^\frac{1}{2}
\\
& \leq C\left(\displaystyle\int_{\absval{\tau'}\leq\frac{1}{T}}T^{-2b'}d\tau'\right)^\frac{1}{2}
\\
& =CT^{-b'}\left(\displaystyle\int_{\absval{\tau'}\leq\frac{1}{T}}d\tau'\right)^\frac{1}{2}
\\
&=CT^{-b'}\left(\frac{1}{T}+\frac{1}{T} \right)^\frac{1}{2} \\
&=\sqrt{2}CT^{-b'}\left(T^{-1} \right)^\frac{1}{2} \\
&=CT^{-b'-\frac{1}{2}}
\end{align*}

\vspace{35pt}

In conclusion, it transpires that
\begin{align*}
\norm{I_1}_{H^b} &\leq A_1A_2T^{1-k}\norm{f}_{H^{-b'}}
\\
&\leq \left( \displaystyle\sum_{k=1}^{\infty}\frac{C}{k!}T^{k+\frac{1}{2}-b}  \cdot CT^{-b'-\frac{1}{2}} \cdot T^{1-k}\right)\norm{f}_{H^{-b'}}
\\
&\leq C \left(\displaystyle\sum_{k=1}^{\infty} \frac{1}{k!}  T^{k+\frac{1}{2}-b-b'-\frac{1}{2}+1-k}\right)\norm{f}_{H^{-b'}}
\\
&= C\left(\displaystyle\sum_{k=1}^{\infty} \frac{1}{k!}\right)  T^{-b-b'+1}\norm{f}_{H^{-b'}}
\\
&\leq CT^{-b-b'+1}\norm{f}_{H^{-b'}} \\
&=CT^{-\left(\frac{1}{2}+\epsilon\right)-\left(\frac{1}{2}-2\epsilon\right)+1}\norm{f}_{H^{-b'}} \\
&:=C_1T^\epsilon\norm{f}_{H^{-b'}}.
\end{align*}
For $I_2$, we have
\begin{align*}
\norm{I_2}_{H^b}&=\norm{\japbrack{\tau}^b \mathcal{F}_t \left[\psi_T\displaystyle\int_{\absval{\tau}\geq\frac{1}{T}}\frac{\hat{f}(\tau)}{i\tau}d\tau\right]}_{L_{\tau}^{2}} 
\\
&=\norm{\japbrack{\tau}^b \displaystyle\int_{\mathbb{R}}e^{-it\tau}\psi_T\displaystyle\int_{\absval{\tau}\geq\frac{1}{T}}\frac{\hat{f}(\tau)}{i\tau}d\tau dt}_{L_{\tau}^{2}} 
\\
&=\norm{\japbrack{\tau}^b \hat{\psi}_T \displaystyle\int_{\absval{\tau}\geq\frac{1}{T}}\frac{\hat{f}(\tau)}{i\tau}d\tau}_{L_{\tau}^{2}} 
\\
& \leq \norm{\psi_T}_{H^b} \cdot \displaystyle\int_{\absval{\tau}\geq\frac{1}{T}}\frac{\absval{\hat{f}(\tau)}}{\absval{\tau}}\japbrack{\tau}^{-b'}\japbrack{\tau}^{b'}d\tau
\\
& \leq \norm{\psi_T}_{H^b} \cdot \norm{f}_{H^{-b'}} \cdot \left(\displaystyle\int_{\absval{\tau}\geq\frac{1}{T}} \frac{\left(1+\tau^2 \right)^{b'}}{\absval{\tau}^2}d\tau \right)^{\frac{1}{2}}.
\end{align*}

\vspace{35pt}

However, if one lets $\beta=\tau T$, then $d\tau=\frac{1}{T}d\beta$
\begin{align*}
\left(\displaystyle\int_{\absval{\tau}\geq\frac{1}{T}} \frac{\left(1+\tau^2 \right)^{b'}}{\absval{\tau}^2}d\tau \right)^{\frac{1}{2}} &= \left(\displaystyle\int_{\absval{\beta}\geq 1} \frac{T^2}{\beta^2}\left(1+\frac{\beta^2}{T^2}\right)^{b'}\frac{1}{T}d\beta \right)^{\frac{1}{2}}
\\
& \leq C T^{\frac{1}{2}}\left(\displaystyle\int_{\absval{\beta}\geq 1} T^{-2b'}\frac{\beta^{2b'}}{\beta^2}d\beta \right)^{\frac{1}{2}}
\\
&= C T^{\frac{1}{2}-b'}\left(\displaystyle\int_{\absval{\beta}\geq 1} \frac{\beta^{1-4\epsilon}}{\beta^2}d\beta \right)^{\frac{1}{2}}
\\
&= C T^{\frac{1}{2}-b'}\left(\displaystyle\int_{\absval{\beta}\geq 1} \frac{1}{\beta^{1+4\epsilon}}d\beta \right)^{\frac{1}{2}}
\\
&= C T^{\frac{1}{2}-b'}.
\end{align*}
Due to lemma \ref{lemma:psiT} and fact that $\epsilon \in \left(0,\frac{1}{4}\right)$ implies that $\displaystyle\int_{\absval{\beta}\geq 1} \frac{1}{\beta^{1+4\epsilon}}d\beta < \infty$, we have
\begin{align*}
\norm{I_2}_{H^b}&\leq C\norm{\psi_T}_{H^b} \cdot \norm{f}_{H^{-b'}} \cdot T^{\frac{1}{2}-b'} \\
&\leq CT^{\frac{1}{2}-b} \cdot \norm{f}_{H^{-b'}} \cdot T^{\frac{1}{2}-b'}
\\
&=CT^{1-b-b'}\norm{f}_{H^{-b'}}\\
&:=C_2T^{\epsilon}\norm{f}_{H^{-b'}}.\\
\end{align*}
For $I_3$ it follows that
\begin{align*}
\norm{I_3}_{H^b}&=\norm{\japbrack{\tau}^b \mathcal{F}_t \left[\psi_T\displaystyle\int_{\absval{\tau}\geq\frac{1}{T}}\frac{e^{it\tau}}{i\tau}\hat{f}(\tau)d\tau\right]}_{L_{\tau}^{2}} =\norm{\japbrack{\tau}^b \mathcal{F}_t \left[\psi_T \cdot \mathcal{F}^{-1}_\tau \left[\mathbf{1}_{\absval{\tau}\geq\frac{1}{T}}\frac{\hat{f}(\tau)}{i\tau}\right]\left(t\right)\right]}_{L_{\tau}^{2}} \\
&=\norm{\japbrack{\tau}^b \cdot \hat{\psi}_T \ast \left(\mathbf{1}_{\absval{\tau}\geq\frac{1}{T}}\frac{\hat{f}(\tau)}{i\tau}\right)}_{L_{\tau}^{2}}.
\\
\end{align*}
Define $\hat{B}(\tau)=:\mathbf{1}_{\absval{\tau}\geq\frac{1}{T}}\frac{\hat{f}(\tau)}{i\tau}$, then
\begin{align*}
\norm{B}_{H^b}&=\norm{\japbrack{\tau}^b\hat{B}}_{L_{\tau}^{2}}=\norm{\japbrack{\tau}^b\frac{\japbrack{\tau}^{b'}}{\japbrack{\tau}^{b'}}\hat{B}}_{L_{\tau}^{2}}
=\norm{\hat{f}(\tau)\frac{\japbrack{\tau}^{-b'}}{i\tau}\japbrack{\tau}^{b+b'}\mathbf{1}_{\absval{\tau}\geq\frac{1}{T}}}_{L_{\tau}^{2}}
\\
& \leq \norm{f}_{H^{-b'}} \cdot \sup_{\absval{\tau}\geq\frac{1}{T}}\frac{\japbrack{\tau}^{b+b'}}{\absval{\tau}}.
\\
\end{align*}

However,
\begin{displaymath}
\sup_{\absval{\tau}\geq\frac{1}{T}}\frac{\japbrack{\tau}^{b+b'}}{\absval{\tau}} \leq C\sup_{\absval{\tau}\geq\frac{1}{T}}\absval{\tau}^{b'+b-1} =C\sup_{\absval{\tau}\geq\frac{1}{T}}\frac{1}{\absval{\tau}^{1-b-b'}}\leq CT^{1-b-b'} = CT^{\epsilon}. \\
\end{displaymath}

\medskip

As a result,
\begin{displaymath}
\norm{B}_{H^b}\leq \norm{f}_{H^{-b'}}\cdot CT^{1-b-b'}=CT^\epsilon\norm{f}_{H^{-b'}}.  
\end{displaymath}

Hence,
\begin{align*}
&\norm{\japbrack{\tau}^b \cdot \hat{\psi}_T \ast \left(\mathbf{1}_{\absval{\tau}\geq\frac{1}{T}}\frac{\hat{f}(\tau)}{i\tau}\right)}_{L_{\tau}^{2}} = \norm{\japbrack{\tau}^b \cdot \hat{\psi}_T \ast \hat{B}}_{L_{\tau}^{2}}
\\
= &  \left(\displaystyle\int_{\mathbb{R}}\left(\displaystyle\int_{\mathbb{R}}\japbrack{\tau}^b \hat{\psi}_T(\tau-\tau')\hat{B}(\tau') d\tau'\right)^2 d\tau \right)^{\frac{1}{2}}
\\
\leq &\left(\displaystyle\int_{\mathbb{R}}\left(\displaystyle\int_{\mathbb{R}}\left(\japbrack{\tau-\tau'}^b+ \japbrack{\tau'}^b\right)\hat{\psi}_T(\tau-\tau')\hat{B}(\tau') d\tau'\right)^2 d\tau \right)^{\frac{1}{2}}
\\ 
=&\norm{\japbrack{\tau}^b  \hat{\psi}_T \ast \hat{B}+ \hat{\psi}_T \ast \japbrack{\tau}^b\hat{B}}_{L_{\tau}^{2}}\leq \norm{\japbrack{\tau}^b \hat{\psi}_T \ast \hat{B}}_{L_{\tau}^{2}}+\norm{\hat{\psi}_T \ast \japbrack{\tau}^b \hat{B}}_{L_{\tau}^{2}}
\\
 \leq& \norm{\japbrack{\tau}^b \hat{\psi}_T}_{L_{\tau}^{1}}  \norm{\hat{B}}_{L_{\tau}^{2}}+\norm{\hat{\psi}_T}_{L_{\tau}^{1}} \norm{B}_{H^b}
\\
 = & \norm{\japbrack{\tau}^b \hat{\psi}_T}_{L_{\tau}^{1}}  \norm{\mathbf{1}_{\absval{\tau}\geq\frac{1}{T}}\frac{\hat{f}(\tau)}{i\tau}}_{L_{\tau}^{2}}+\norm{\hat{\psi}_T}_{L_{\tau}^{1}} \norm{B}_{H^b}
\\
= & \norm{\japbrack{\tau}^b \hat{\psi}_T}_{L_{\tau}^{1}}  \norm{\japbrack{\tau}^{b'}\japbrack{\tau}^{-b'}\mathbf{1}_{\absval{\tau}\geq\frac{1}{T}}\frac{\hat{f}(\tau)}{i\tau}}_{L_{\tau}^{2}}+\norm{\hat{\psi}_T}_{L_{\tau}^{1}} \norm{B}_{H^b}
\\ 
\leq&  C\sup_{\absval{\tau}\geq\frac{1}{T}}\absval{\tau}^{b+b'-1}\norm{\hat{\psi}_T}_{L_{\tau}^{1}}  \norm{\japbrack{\tau}^{-b'}\hat{f}}_{L_{\tau}^{2}}+\norm{\hat{\psi}_T}_{L_{\tau}^{1}} \norm{B}_{H^b} 
\\
= &C\sup_{\absval{\tau}\geq\frac{1}{T}}\frac{1}{\absval{\tau}^{1-b-b'}}\norm{\hat{\psi}_T}_{L_{\tau}^{1}}  \norm{\japbrack{\tau}^{-b'}\hat{f}}_{L_{\tau}^{2}}+\norm{\hat{\psi}_T}_{L_{\tau}^{1}} \norm{B}_{H^b} 
\\
 \leq & CT^{1-b-b'}\norm{\hat{\psi}_T}_{L_{\tau}^{1}}  \norm{\japbrack{\tau}^{-b'}\hat{f}}_{L_{\tau}^{2}}+\norm{\hat{\psi}_T}_{L_{\tau}^{1}} \norm{B}_{H^b} 
\\
\leq & CT^{1-b'-b}\norm{f}_{H^{-b'}} +CT^{1-b-b'}\norm{f}_{H^{-b'}}
\\
 =&CT^{1-b-b'}\norm{f}_{H^{-b'}}
\\
:=& C_3T^\epsilon\norm{f}_{H^{-b'}}.
\end{align*}
Therefore,
\begin{align*}
\norm{\psi_T\displaystyle\int_{0}^{t}f(t')dt'}_{H^b} &\leq \displaystyle\sum_{j=1}^{3} \norm{I_j}_{H^b} 
\\
& \leq \displaystyle\sum_{i=1}^{3}C_jT^\epsilon\norm{f}_{H^{-b'}}
\\
&=CT^\epsilon\norm{f}_{H^{-b'}}
\end{align*}
and Lemma \ref{long lemma} is proved and we are ready to prove Theorem \ref{main lemma}.
\end{proof}
\begin{proof}
Let $\epsilon \in \left(0,\frac{1}{4}\right)$, $b=\frac{1}{2}+\epsilon$ and $b'=\frac{1}{2}-2\epsilon$. We proceed to prove Theorem \ref{main lemma} from Lemma \ref{long lemma}. Firstly, a straightforward calculation shows that $\norm{u}_{X_i^{b,s}}=\norm{S_i(-t)u}_{H^{b,s}}$. 
\begin{align*}
\norm{S_i(-t)u}_{H^{b,s}} &=\norm{\japbrack{\xi}^2\japbrack{\tau}^b  \japbrack{\absval{\xi}+\absval{\mu}}^s\mathcal{F}_{x,y,t}\left[ S_i(-t)u \right] }_{L_{\xi,\mu,\tau}^{2}}
\\
&=\norm{\japbrack{\xi}^2\japbrack{\tau}^b  \japbrack{\absval{\xi}+\absval{\mu}}^se^{-it\omega_i}\hat{u}\left(\xi,\mu,\tau \right) }_{L_{\xi,\mu,\tau}^{2}}
\\
&=\norm{\japbrack{\xi}^2\japbrack{\tau}^b  \japbrack{\absval{\xi}+\absval{\mu}}^s\hat{u}\left(\xi,\mu,\tau+\omega_i \right) }_{L_{\xi,\mu,\tau}^{2}}
\end{align*}
If we let $\beta=\tau+\omega_i$, then $d\beta=d\tau$ and the above equals
\begin{equation*}
\norm{\japbrack{\xi}^2\japbrack{\beta-\omega_i}^b  \japbrack{\absval{\xi}+\absval{\mu}}^s\hat{u}\left(\xi,\mu,\beta\right)}_{L_{\xi,\mu,\beta}^{2}} =\norm{\japbrack{\xi}^2\japbrack{\tau-\omega_i}^b  \japbrack{\absval{\xi}+\absval{\mu}}^s\hat{u}\left(\xi,\mu,\tau\right) }_{L_{\xi,\mu,\tau}^{2}}
=\norm{u}_{X_i^{b,s}}.
\end{equation*}
Thus, the term
\begin{displaymath}
\norm{\psi_T\displaystyle\int_{0}^{t} S_i(t-t')F_i(w_1(t'),w_2(t'))dt'}_{X_i^{b,s}}
\end{displaymath}
is equal to
\begin{displaymath}
\norm{S_i\left(-t\right)\psi_T\displaystyle\int_{0}^{t} S_i(t-t')F_i(w_1(t'),w_2(t'))dt'}_{H^{b,s}}.
\end{displaymath}
Therefore, 
\begin{align*}
=&\norm{\psi_T\displaystyle\int_{0}^{t} S_i(-t')F_i(w_1(t'),w_2(t'))dt'}_{H^{b,s}}
\\
=&\norm{\japbrack{\xi}^2\japbrack{\tau}^b  \japbrack{\absval{\xi}+\absval{\mu}}^s\mathcal{F}_{x,y,t}\left[\psi_T\displaystyle\int_{0}^{t} S_i(-t')F_i(w_1(t'),w_2(t'))dt'\right] }_{L_{\xi,\mu,\tau}^{2}}
\\
=&\norm{\japbrack{\xi}^2 \japbrack{\absval{\xi}+\absval{\mu}}^s\mathcal{F}_{x,y}\left[\japbrack{\tau}^b \mathcal{F}_{t}\left[\psi_T\displaystyle\int_{0}^{t} S_i(-t')F_i(w_1(t'),w_2(t'))dt'\right]\right]}_{L_{\xi,\mu,\tau}^{2}}
\\
\leq & \norm{\japbrack{\xi}^2 \japbrack{\absval{\xi}+\absval{\mu}}^s\norm{\mathcal{F}_{x,y}\left[\psi_T\displaystyle\int_{0}^{t} S_i(-t')F_i(w_1(t'),w_2(t'))dt'\right]}_{H^{b}}}_{L_{\xi,\mu,\tau}^{2}}
\\
 \leq & CT^{1-b-b'}\norm{\japbrack{\xi}^2 \japbrack{\absval{\xi}+\absval{\mu}}^s\norm{\mathcal{F}_{x,y}\left[S_i(-t')F_i(w_1(t'),w_2(t'))\right]}_{H^{-b'}}}_{L_{\xi,\mu,\tau}^{2}}
\\
= & CT^{1-b-b'}\norm{S_i(-t')F_i(w_1(t'),w_2(t'))}_{H^{-b',s}}
\\
= & CT^{1-b-b'}\norm{F_i(w_1(t'),w_2(t'))}_{X_i^{-b',s}}
\\
= & CT^{\epsilon}\norm{F_i(w_1(t'),w_2(t'))}_{X_i^{-b',s}}
\end{align*}
\end{proof}
\section*{Acknowledgements} 
 \small{J.A. is grateful for the support from the College of Computing, Artificial Intelligence, Robotics, and Data Science at Saint Leo University.}
\section*{Appendix}
\label{sec:appendix}
The purpose of this appendix is to cover the derivation of a recursive formula for the terms in the Taylor expansion of the Dirichlet-Neumann operator. To accomplish this we follow the work of Craig et al. \cite{CG}. The method utilized hinges on the following theorem, for a proof we refer the reader to \cite{CHM}.
\begin{theorem}
There exists a constant $C=O\left(h_0\right)$ such that the operator $G(\eta)$ is analytic for $\eta \in C^\infty$ in a ball of radius $C$ about the point $\eta=0$.
\end{theorem}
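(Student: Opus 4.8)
The plan is to flatten the fluid domain to a fixed strip, recast the defining boundary-value problem for $G(\eta)$ as an elliptic equation on that strip whose coefficients are an analytic, $O(\eta)$ perturbation of the flat Laplacian, and then extract analyticity of $\eta\mapsto G(\eta)$ from a Neumann-series (analytic perturbation) argument. First I would introduce the change of variables $(x,y)\mapsto(x,z)$ with $z=h_0\,\dfrac{y-\eta(x)}{h_0+\eta(x)}$, which is a diffeomorphism of $S(\eta)$ onto the fixed strip $S_0=\{(x,z):-h_0<z<0\}$ precisely when $\norm{\eta}_{L^\infty}<h_0$; this smallness requirement is already the source of the radius $C=O(h_0)$. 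Transporting $\Delta\phi=0$ through this map produces a divergence-form equation $\nabla_{x,z}\!\cdot\!\big(A(\eta)\nabla_{x,z}\psi\big)=0$ on $S_0$, with Dirichlet data $\psi|_{z=0}=\Phi$ and the conormal condition at $z=-h_0$, where $A(\eta)=I+B(\eta)$ and $B(\eta)$ is a rational --- hence real-analytic --- function of $\eta$ and $\partial_x\eta$ that vanishes at $\eta=0$.

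Next I would build the solution operator. Let $L_0^{-1}$ denote the bounded solution operator of the flat strip problem, which is an explicit Fourier multiplier in $x$; writing the flattened equation as $L_0\psi=\nabla\!\cdot\!\big(B(\eta)\nabla\psi\big)$ modulo the boundary data gives $\psi=\big(I-L_0^{-1}R(\eta)\big)^{-1}\psi_0$ with $R(\eta)\psi:=\nabla\!\cdot\!\big(B(\eta)\nabla\psi\big)$ and $\psi_0$ the flat harmonic extension of $\Phi$. A quantitative estimate shows $\norm{L_0^{-1}R(\eta)}<1$ once $\norm{\eta}_{C^1}$ lies below a threshold that is $O(h_0)$, so the Neumann series $\sum_{j\ge0}\big(L_0^{-1}R(\eta)\big)^j$ converges in operator norm on that ball. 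Since $\eta\mapsto R(\eta)$ is analytic and the convergence is uniform over the ball, the composition $\eta\mapsto\psi$ is analytic with values in $H^{s+1}(S_0)$ for every admissible Sobolev index $s$, hence also for $\eta\in C^\infty$.

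Finally, $G(\eta)\Phi$ equals the conormal derivative of $\psi$ restricted to $z=0$ times the Jacobian weight of the change of variables --- a fixed bounded operation (a trace composed with a first-order operator, multiplied by a weight that is itself analytic in $\eta$). Composing an analytic family with a bounded linear map preserves analyticity, so $\eta\mapsto G(\eta)$ is analytic on $\{\norm{\eta}<C\}$ with $C=O(h_0)$, and the homogeneous terms $G_j(\eta)$ of the expansion \ref{1.12} are precisely the coefficients of this convergent Taylor series. An alternative route, bypassing the flattening, represents $\phi$ by single- and double-layer potentials on the graph $y=\eta(x)$, expresses $G(\eta)$ through boundary integral operators whose kernels depend analytically on $\eta$, and invokes invertibility of the relevant integral operator for small $\eta$; the combinatorics differ but the logical skeleton is the same.

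The hard part will be the quantitative step: proving operator bounds for $L_0^{-1}R(\eta)$ that are uniform in $\eta$ over a ball whose radius is genuinely $O(h_0)$, while tracking the $h_0$-dependence --- and the $s$-dependence, since the hypothesis $\eta\in C^\infty$ must be unwound into estimates at each Sobolev level. Ellipticity of $A(\eta)$ degenerates as $\norm{\eta}_{L^\infty}\uparrow h_0$, which couples the Neumann-series threshold to the analyticity radius; pinning down this interplay is the technical core of the theorem and the reason the paper defers to \cite{CHM}.
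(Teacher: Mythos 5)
First, a point of comparison: the paper does not prove this theorem at all --- it is stated in the appendix purely as an imported fact, with the proof explicitly deferred to \cite{CHM}. So there is no internal argument to measure you against; what you have written must stand on its own. On its own terms, your outline follows the standard and essentially correct strategy for analyticity of the Dirichlet--Neumann operator: flatten $S(\eta)$ to the fixed strip, observe that the transported operator is an analytic perturbation $I+B(\eta)$ of the flat problem, and sum the Neumann series $\sum_j \left(L_0^{-1}R(\eta)\right)^j$ uniformly on a small ball. This is the Nicholls--Reitich style argument; the layer-potential alternative you mention is the Coifman--Meyer / Craig--Schanz--Sulem route. Either is a legitimate skeleton, and your identification of where the radius $O(h_0)$ enters (the diffeomorphism degenerates as $\norm{\eta}_{L^\infty}\uparrow h_0$) is the right heuristic.

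That said, as submitted this is a program rather than a proof, and you say so yourself: the entire mathematical content of the theorem is the quantitative step you defer --- the uniform bound $\norm{L_0^{-1}R(\eta)}<1$ on a ball whose radius is genuinely $O(h_0)$, with the $h_0$- and Sobolev-index dependence tracked. Two specific gaps to flag. First, the smallness condition for the Neumann series unavoidably involves $\norm{\partial_x\eta}_{L^\infty}$ (the coefficients $B(\eta)$ contain $\partial_x\eta$ quadratically), whereas the radius $C=O(h_0)$ in the statement naturally controls only $\norm{\eta}_{L^\infty}$; you must either specify the norm defining the ball (a ball of radius $O(h_0)$ ``in $C^\infty$'' is not well-defined without choosing a norm or a Fr\'echet-space notion of analyticity) or explain why the derivative constraint does not shrink the radius below $O(h_0)$. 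Second, the claim that uniform convergence of the Neumann series yields \emph{analyticity} of $\eta\mapsto G(\eta)$, with the $G_j(\eta)$ as the homogeneous Taylor terms, needs the standard but non-trivial step of expanding each $\left(L_0^{-1}R(\eta)\right)^j$ in multilinear pieces and verifying absolute convergence of the rearranged multilinear series --- boundedness of the resolvent alone gives only well-definedness, not analyticity. Filling these in is exactly why the paper outsources the result to \cite{CHM}.
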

As a direct result, the Taylor series expansion of $G(\eta)$ about $0$ takes the form
\begin{equation*}
G(\eta) = \sum_{j \geq 0} G_j(\eta),
\end{equation*}
for spatial dimensions $n=2,3$, where each $G_j(\eta)$ is homogeneous of degree $j$ in $n$. We then proceed to systematically compute the terms $G_j(\eta)$ via a recursive formula introduced in \cite{CSS}. In what follows we define the fluid domain to be the region 
$$S(\eta)=\{(x,y) :x \in S_0 , -h_0 \leq y \leq \eta(x)\}.$$

One should observe that $S(\eta)$ is bounded below by the rigid bottom $y=-h_0$ and above by the graph
$S_g=\{(x,y) :x \in S_0,y=\eta(x)\}$, where $S_0=\{(x,0,z) :(x,z) \in \mathbb{R}^2 \}$ is the undisturbed free surface. The elliptic boundary value problem to be solved is
\begin{equation*}
\begin{cases}
\Delta \phi = 0 \quad \text{in} \quad S(\eta) &\\
\phi = \Phi \quad \text{on} \quad S_G &\\
\frac{\partial \phi}{\partial n}=0 \quad \text{on} \quad y=-h_0 &\\
\end{cases}
\label{BVP}
\end{equation*}
coupled with the appropriate periodic or asymptotic conditions on $\phi$. It is well known that there exists a family of solutions to the problem for $k \in \mathbb{R}^{n-1}$. In accordance with the above discussion, we define
\begin{equation}
\phi_k(x,y)=e^{i(x \cdot k)}\cosh(\absval{k}(y+h_0)).
\label{BVP sol}
\end{equation}
These functions have the boundary values $\Phi_k(x,y)=e^{i(x \cdot k)}\cosh(\absval{k}(\eta(x)+h_0))$, and are compatible with the asymptotic conditions as $\absval{x} \rightarrow \infty$, or with conditions of periodicity for appropriately chosen wavevectors $k$. Now since the family of functions $\phi_k$ satisfy the equation
\begin{equation}
G(\eta)\Phi_k=\left[ \frac{\partial \phi_k}{\partial y}- \frac{\partial \eta}{\partial x} \cdot \frac{\partial \phi_k}{\partial x} \right]_{y=\eta}.
\label{DN expression}
\end{equation}
Now, we substitute (\ref{BVP sol}) into (\ref{DN expression}) and expand the Dirichlet-Neumann operator and hyperbolic functions to obtain
\begin{align*}
& \sum_{j \ \text{even}} \frac{\left(\absval{k} \eta \right)^j}{j!} \left(\sinh(\absval{k}h_0) - i \frac{\partial \eta}{\partial x} \cdot k  \cosh(\absval{k}h_0)   \right)e^{i(x \cdot k)} \\
&+ \sum_{j \ \text{odd}}  \frac{\left(\absval{k} \eta \right)^j}{j!} \left(\cosh(\absval{k}h_0) - i \frac{\partial \eta}{\partial x} \cdot k  \sinh(\absval{k}h_0)   \right)e^{i(x \cdot k)} \\
=& \left(\sum_{m \geq 0} G_m(\eta)\right)\left(\sum_{j \ \text{even}} \frac{\left(\absval{k} \eta \right)^j}{j!} \cosh(\absval{k}h_0) e^{i(x \cdot k)} +\sum_{j \ \text{odd}} \frac{\left(\absval{k} \eta \right)^j}{j!} \sinh(\absval{k}h_0) e^{i(x \cdot k)} \right).
\end{align*} 
We then proceed to obtain the recursive formula by equating terms of the same degree in $\eta$. For $j=0$ it follows that
\begin{equation*}
G_0 e^{i(x \cdot k)}=\absval{k}\tanh(\absval{k}h_0)e^{i(x \cdot k)}.
\label{DchNeum}
\end{equation*}
By utilizing Fourier analytic techniques, one obtains a forumula for $G_j \Phi$ , valid for sufficient $\Phi$ and $j=0,1,2$. 
\begin{align*}
G_0(\eta) \Phi \left(x\right) &=\absval{D} \tanh(\absval{D} h_0)\Phi \left(x\right),
\end{align*}
For convenience, we suppress the $\Phi$ dependency and write
\begin{align*}
 G_0 (\eta) =&\absval{D} \tanh(\absval{D} h_0),  \\
 G_1 (\eta) =&\absval{D} \eta \absval{D} - \absval{D} \tanh(\absval{D} h_0)\eta \absval{D} \tanh(\absval{D} h_0), \nonumber \\
 G_2 (\eta) =&-\frac{1}{2}\Big[ \absval{D}^2 \eta^2 \absval{D} \tanh(\absval{D} h_0)\nonumber \\
&+ \absval{D} \tanh(\absval{D} h_0) \eta^2 \absval{D}^2 \nonumber\\
&- 2\absval{D} \tanh(\absval{D} h_0)\eta \absval{D} \tanh(\absval{D} h_0)\eta \absval{D} \tanh(\absval{D} h_0)\Big]. \nonumber\\
\end{align*}

More generally, we obtain
{\small
\begin{equation*}
  G_j(\eta) =  \left\{
\begin{array}{rl}
\displaystyle\frac{1}{j!}\Big[\eta^j \absval{D}^{j+1}\tanh(\absval{D}h_0) - i \frac{\partial}{\partial x} \left(\eta^j\right) \cdot D \absval{D}^{j-1}\tanh(\absval{D}h_0)\Big]  & \\
\displaystyle-\sum_{\nu < j, \nu \ \text{even}} \frac{1}{(j-\nu)!} G_\nu(\eta)\eta^{j-\nu} \absval{D}^{j-\nu} & \text{if $j$ is even} \\
-\displaystyle\sum_{\nu < j, \nu \ \text{odd}} \frac{1}{(j-\nu)!} G_\nu(\eta)\eta^{j-\nu} \absval{D}^{j-\nu}\tanh(\absval{D}h_0)  &  \\
& \\
\displaystyle\frac{1}{j!}\Big[\eta^j \absval{D}^{j+1} - i \frac{\partial}{\partial x} \left(\eta^j\right) \cdot D \absval{D}^{j-1}\Big] & \\
\displaystyle-\sum_{\nu < j, \nu \ \text{even}} \frac{1}{(j-\nu)!} G_\nu(\eta)\eta^{j-\nu} \absval{D}^{j-\nu}\tanh(\absval{D}h_0) &\text{if $j$ is odd} \\
\displaystyle-\sum_{\nu < j, \nu \ \text{odd}} \frac{1}{(j-\nu)!} G_\nu(\eta)\eta^{j-\nu} \absval{D}^{j-\nu} &\\
    \end{array} \right.
\label{DchNeumGEN}
\end{equation*}
}
The above equations constitute the recursive formula for the terms $G_j(\eta)$, as shown in \cite{CG}.
$\,$

$\,$

\bibliographystyle{siam}
\bibliography{NONKPbio} 

\section*{Affiliations} 
\noindent \small{College of Computing, Artificial Intelligence, Robotics, and Data Science, Saint Leo University, Saint Leo, FL 33574, USA}
\end{document}